\newtheorem{theorem}{Theorem}
\newtheorem{lemma}{Lemma}
\newtheorem{definition}{Definition}
\newtheorem{corollary}{Corollary}
\begin{document}
{\selectlanguage{english}
\binoppenalty = 10000 %
\relpenalty   = 10000 %

\pagestyle{headings} \makeatletter
\renewcommand{\@evenhead}{\raisebox{0pt}[\headheight][0pt]{\vbox{\hbox to\textwidth{\thepage\hfill \strut {\small Grigory K. Olkhovikov \& Guillermo Badia}}\hrule}}}
\renewcommand{\@oddhead}{\raisebox{0pt}[\headheight][0pt]{\vbox{\hbox to\textwidth{{Craig Interpolation Theorem fails  in Bi-intuitionistic Predicate Logic}\hfill \strut\thepage}\hrule}}}
\makeatother

\title{Craig Interpolation Theorem fails \\ in Bi-intuitionistic Predicate Logic}
\author{Grigory K. Olkhovikov\\ Department of Philosophy I\\ Ruhr University Bochum\\ Bochum, Germany \\
email: grigory.olkhovikov@\{rub.de, gmail.com\} \\ \\ Guillermo Badia\\ School of Historical and Philosophical Inquiry\\ University of Queensland\\ Brisbane, Australia \\
email: g.badia@uq.edu.au
\\ \\
\emph{This article is dedicated to the memory of Grigori Mints (1939-2014)}
}

\date{}
\maketitle
\begin{quote}
{\bf Abstract.} In this article we show that  bi-intuitionistic predicate logic lacks the Craig Interpolation Property. We proceed by adapting the counterexample given by Mints, Olkhovikov and Urquhart for intuitionistic predicate logic with constant domains \cite{mou}. More precisely, we show that there is a valid implication $\phi \rightarrow \psi$ with no interpolant (i.e. a formula $\theta$ in the intersection of the vocabularies of $\phi$ and $\psi$ such that both $\phi \rightarrow \theta$ and $\theta \rightarrow \psi$ are valid). Importantly, this result does not contradict the unfortunately named `Craig interpolation' theorem established by Rauszer in \cite{ra5} since that article is about the property more correctly named `deductive interpolation' (see Galatos, Jipsen, Kowalski and Ono's use of this term in  \cite{Gal}) for global consequence. Given that the deduction theorem fails for  bi-intuitionistic logic with global consequence, the two formulations of the property are not equivalent.
\end{quote}
\begin{quote}{\bf Keywords.} bi-intuitionistic predicate logic, Craig Interpolation Theorem, bi-asimulation
\end{quote}
\section{Introduction}

Bi-intuitionistic logic (or \emph{Heyting-Brouwer} logic) comes rather naturally from adding the algebraic dual of the intuitionistic implication $\rightarrow$ to the language of intuitionistic logic. This  connective is known as `co-implication', and  we denote it in what follows by `$\ll $'.\footnote{This is sometimes also called \emph{subtraction} \cite{re}. In the Kripke semantics for bi-intuitionistic logic, the new connective behaves similarly to a backwards looking diamond modality $\Diamond^{-1}$.}
In the 1970s, C. Rauszer started an intense study  of various technical aspects of both propositional and predicate bi-intuitionistic logic in a series of interesting articles spanning over a decade \cite{ra1, ra2, ra3, ra4, ra5, ra6}. 
This work has been picked up in  recent years by a number of scholars  \cite{ba1, ba2, Olk2,  go, gore2, gr, ko,  pi, sk, tr, cro} and results on all sorts of proof-theoretic and model-theoretic properties of these systems have been produced.

In this paper we are  concerned with refuting the Craig Interpolation Property for predicate bi-intuitionistic logic. In other words, we will show that there are valid implications $\phi \rightarrow \psi$ with no interpolant in the sense of  a formula $\theta$ in the intersection of the vocabularies of $\phi$ and $\psi$ such that both $\phi \rightarrow \theta$ and $\theta \rightarrow \psi$ are valid. Our counterexample is extracted from \cite{mou} where the Craig interpolation problem for predicate intuitionistic logic with constant domains is solved negatively. More specifically, we consider the implication $\phi \rightarrow \psi$ where  $$
\phi:= \forall x\exists y(P(y) \wedge (Q(y) \to R(x))) \wedge \neg\forall xR(x),
$$
and
$$
\psi:= \forall x(P(x) \to (Q(x) \vee S)) \to S.
$$
One of the reasons why our construction works is because  bi-intuitionistic predicate logic is well-known to be complete with respect to its constant domain semantics \cite{ra4}. Hence, the effect of adding the co-implication connective is that one can restrict attention to  Kripke models with constant domains (this has led some authors to question the  status of $\ll$ as an intuitionistic connective \cite{lop}).

C. Rauszer had already studied the interpolation problem for both predicate and propositional versions of bi-intuitionistic logic for global consequence in \cite{ra5}.\footnote{Later in the paper we will work with a local version of consequence. The many subtleties between the two notions of consequence in the bi-intuitionistic setting are thoroughly studied in \cite{gore2}. That paper also corrects several mistakes by Rauszer in her original work.} However, she had only established interpolation in a much weaker version properly known as `deductive interpolation' (see \cite{Gal}): if $\phi \vDash^g \psi$ there is  a formula $\theta$ in the intersection of the vocabularies of $\phi$ and $\psi$ such that both $\phi \vDash^g \theta$ and $\theta \vDash^g \psi$. Given that in bi-intuitionistic logic the deduction theorem fails for global consequence,\footnote{One counterexample, already in bi-intuitionistic propositional logic, is that $p \vDash^g (\top \ll p) \rightarrow \bot$, but $p\rightarrow ((\top \ll p) \rightarrow \bot)$ is not valid (see \cite{gore2}). } deductive interpolation is not equivalent to the usual Craig interpolation. Craig interpolation for propositional bi-intuitionistic logic was established only   recently by a complex proof-theoretic argument  \cite{ko}. Unfortunately, as we will see, those techniques cannot be extended to  the predicate case.

E. G. K. L\'opez-Escobar had observed in \cite{lop} that  bi-intuitionistic predicate logic is not conservative as an extension of  intuitionistic predicate logic simply because the axiom of constant domains is derivable in the former. It is known, however, that  bi-intuitionistic predicate logic is  a conservative extension of constant domain  intuitionistic predicate logic \cite{cro}. Then, one could wonder whether this trivialises the result in the present paper given the argument refuting interpolation for the latter in \cite{mou}. A minute of reflexion suffices to show the reader that this is not so, for the argument in \cite{mou} only establishes that the interpolant does not exist in the language of predicate intuitionistic logic which does not include the new co-implication connective. It does, on the other hand, indicate that the present result is quite natural.

Recently \cite{sh}, a number of questions have arisen as to the correctness of Rauszer's completeness  theorem for predicate bi-intuitionistic logic with respect to the Kripke semantics of constant domains. Three fundamental flaws of Rauszer's proofs are outlined  in \cite{sh}: (i) the fact that they mistakenly mix up properties of global and local consequence relations, (ii) the contradictory choice of rooted frames in the completeness proof which would validate non-theorems of bi-intuitionistic logic, and (iii) an incorrect application of some results by Gabbay. Incidentally, there is an alternative completeness argument  by Klemke in  \cite{kl},  where bi-intuitionistic predicate logic is studied possibly for the first time in print (and, as far as we know,  independently from Rauszer's work) and that contains other errors. This paper appears to have been sadly neglected in the history of the subject and it contains valuable techniques such as a version of the unraveling construction used in \cite{Olk2} (see \cite[Def. 4.1]{kl}). 

Let us note, however, that the central result in the present paper does not actually require any appeal to completeness. All we need is (i) that  the axiom of constant domains is a theorem of bi-intuitionistic predicate logic and (ii) that  bi-intuitionistic predicate logic is sound with respect to the Kripke semantics for intuitionistic logic of constant domains. This is because our implication  $\phi \rightarrow \psi$ described above is a theorem of intuitionistic predicate logic with the addition of the axiom of constant domains, and hence by (i), a theorem of bi-intuitionistic predicate logic. Furthermore, if there would be a proof-theoretic Craig interpolant for this implication, by (ii), it would be a semantic interpolant and hence we would get the desired contradiction reasoning as in Theorem \ref{T:main} below.

The article is arranged as follows: in \S \ref{1} we introduce all the required technical preliminaries that we will use in the construction of our counterexample, namely the language of the logic and its models, as well as  the notion of a bi-asimulation (among a few other things); in \S \ref{2} we introduce the Craig Interpolation Property, Mints's counterexample from \cite{mou} and we show how to adapt the technical argument in that paper to obtain the central result of the present article (Theorem \ref{T:main}); finally, in \S \ref{con} we provide some concluding remarks on the failure of interpolation described here.

\section{Preliminaries and notation}\label{1}

\subsection{The first-order language}
We start by fixing some general notational conventions. In this paper, we identify the natural numbers with finite ordinals. We denote by $\omega$ the smallest infinite ordinal, and by $\mathbb{N}$ the set $\omega \setminus \{0\}$. For any $n \in \omega$, we will denote by $\bar{o}_n$ the sequence
$(o_1,\ldots, o_n)$ of objects of any kind; moreover, somewhat
abusing the notation, we will denote $\{o_1,\ldots,
o_n\}$ by $\{\bar{o}_n\}$. The ordered $1$-tuple will
be identified with its only member. For any given $m,n \in \omega$, the notation $(\bar{p}_m)^\frown(\bar{q}_n)$ denotes the concatenation of $\bar{p}_m$ and $\bar{q}_n$, and the notation $\bar{p}_n\mapsto\bar{q}_n$ denotes the relation $\{(p_i, q_i)\mid 1 \leq i \leq n\}$; the latter relation is often explicitly assumed to define a function.

  More generally, if $f$ is any function, then we will denote by $dom(f)$ its domain
and by $rang(f)$ the image of $dom(f)$ under $f$; if $rang(f)
\subseteq M$, we will also write $f: dom(f) \to M$. If $f: X \to Y$ is any function, and $Z$ is any set, then we denote by $[f]Z$ the set $\{q \in Y\mid (\exists p \in Z)(f(p) = q)\}$; in particular, if $\bar{p}_n\mapsto\bar{q}_n$ defines a function, then $[\bar{p}_n\mapsto\bar{q}_n]Z$ stands for $\{q_i\mid p_i \in Z,\,1 \leq i \leq n\}$.

For a given set $\Omega$ and a $k\in \omega$, the notation $\Omega^k$
(resp. $\Omega^{\neq k}$) will denote the $k$-th Cartesian power
of $\Omega$ (resp. the set of all $k$-tuples from $\Omega^k$ such
that their elements are pairwise distinct). We remind the reader that in the special case when $k = 0$ it is usual to define $\Omega^0:= \{\emptyset\} = 1$, given our earlier convention about the natural numbers. Moreover we will denote the powerset of $\Omega$ by $\mathscr{P}(\Omega)$. We also define that $\Omega^\ast:= \bigcup_{n \geq 0}\Omega^n$. Finally, the notation $\mid X\mid$ will denote the power of the set $X$, so that, for example, $\mid X\mid = \omega$ will mean that $X$ is countably infinite.

In this paper, we consider a first-order language without equality based on any set of
predicate letters (including $0$-ary predicates, that is to say, propositional letters) and individual constants. We do not
allow functions, though.

An ordered couple of sets $\Sigma = (Pred_\Sigma, Const_\Sigma)$
comprising all the predicate letters and constants allowed in a
given version of the first-order language will be called the
\emph{signature} of this language. Signatures will be denoted by
letters $\Sigma$ and $\Theta$. For a given signature $\Sigma$, the
elements of $Pred_\Sigma$ will be denoted by $P^n$ and $Q^n$,
where $n
> 0$ indicates the arity, and the elements of $Const_\Sigma$ will
be denoted by lowercase Latin letters like $a$, $b$, $c$, $d$ and so on. All these notations and all of
the other notations introduced in this section can be decorated by
all sorts of sub- and superscripts. We will often use the notation
$\Sigma_n$ for the set of $n$-ary predicates in a given signature
$\Sigma$.

Even though we have defined signatures as ordered pairs, we will
somewhat abuse the notation in the interest of suggestivity and,
given a set $\Pi$ of predicates and a set $\Delta$ of individual
constants outside a given signature $\Sigma$, we will denote by $\Sigma \cup \Pi \cup \Delta$ the
signature where the predicates from $\Pi$ are added to
$Pred_\Sigma$ with their respective arities and the constants from
$\Delta$ are added to $Const_\Sigma$. Moreover, we will write $\Sigma \subseteq \Theta$ iff there exists a set $\Pi$ of predicates and a set $\Delta$ of individual
constants outside $\Sigma$ such that $\Theta = \Sigma \cup \Pi \cup \Delta$; in other words, iff $\Theta$ extends $\Sigma$ as a signature. In case $\Pi = Pred_{\Sigma'}$ and $\Delta = Const_{\Sigma'}$, we also express this same fact by writing $\Theta = \Sigma \cup \Sigma'$; furthermore, in case we have $P \in \Pi$ and $c \in \Delta$, we will write $P \notin \Sigma$ and $c \notin \Sigma$. Similarly, if $\Theta_1 = \Sigma \cup \Pi_1 \cup \Delta_1$ and $\Theta_2 = \Sigma \cup \Pi_2 \cup \Delta_1$, for pairwise non-overlapping sets of predicates $\Pi_1$ and $\Pi_2$ and sets of constants $\Delta_1$ and $\Delta_2$, we will write that $\Sigma = \Theta_1 \cap \Theta_2$. 

If $\Sigma$ is a signature, then the set of first-order formulas is generated
from $\Sigma$ in the usual way, using the set of logical symbols $\{ \bot, \top, \wedge, \vee, \to, \ll, \forall, \exists \}$ (here $\ll$ stands for the bi-intuitionistic co-implication) and the set of (individual)
variables $Var := \{ v_i \mid i < \omega \}$, and will be denoted by
$L(\Sigma)$. The elements of $Var$ will be also denoted by $x,y,z,
w$, and the elements of $L(\Sigma)$ by Greek letters like
$\phi$, $\psi$ and $\theta$. As is usual, we will use $\neg\phi$ as an abbreviation for $\phi \to \bot$.

For any given signature $\Sigma$, and any given $\phi \in
L(\Sigma)$, we define $BV(\phi)$ and $FV(\phi)$, its sets
of free and bound variables, by the usual inductions. These sets
are always finite. We will denote the set of $L(\Sigma)$-formulas with free
variables among the elements of $\bar{x}_n \in Var^{\neq n}$ by
$L_{\bar{x}_n}(\Sigma)$; in particular, $L_\emptyset(\Sigma)$
will stand for the set of $\Sigma$-sentences. If $\varphi \in
L_{\bar{x}_n}(\Sigma)$ (resp. $\Gamma \subseteq
L_{\bar{x}_n}(\Sigma)$), then we will also express this by
writing $\varphi(\bar{x}_n)$ (resp. $\Gamma(\bar{x}_n)$).

Similarly, given a $\phi \in
L(\Sigma)$, one can define a signature $\Theta_\phi$ such that, for any signature $\Sigma'$ we have $\phi \in L(\Sigma')$ iff $\Theta_\phi \subseteq \Sigma'$. The definition proceeds by induction on the construction of $\phi$ and looks as follows:
\begin{itemize}
	\item $\Theta_{P(\bar{t}_n)} = (\{P^n\}, \{\bar{t}_n\}\cap Const_\Sigma)$, for any $P \in \Sigma_n$ and $\bar{t}_n \in (Var \cup Const_\Sigma)^n$.
	
	\item $\Theta_\bot = \Theta_\top = \emptyset$.
	
	\item $\Theta_{\psi\circ\theta} = \Theta_{\psi}\cup \Theta_{\theta}$ for $\circ \in \{\wedge, \vee, \to, \ll\}$.
	
	\item $\Theta_{\circ\psi} = \Theta_{\psi}$ for $\circ \in \{\forall x, \exists x\mid x \in Var\}$.
\end{itemize}

If $X \subseteq Var$ is finite and $f: X \to Const_\Sigma$, then, for any $\phi \in L(\Sigma)$, we denote by $\phi[f] \in L(\Sigma)$ the result of simultaneously replacing every free occurrence of every $x \in X$ by $f(x)$. The precise definition of this operation proceeds by induction on the construction of $\varphi \in L(\Sigma)$ and runs as follows:
\begin{itemize}
	\item $P(\bar{t}_n) := P(\bar{s}_n)$, where $P \in \Sigma_n$, and $\bar{t}_n, \bar{s}_n \in (Var \cup Const_\Sigma)^n$ are such that, for all $1 \leq i \leq n$ we have:
	$$
	s_i := \begin{cases}
		f(t_i),\text{ if }t_i \in X;\\
		t_i,\text{ otherwise.}
	\end{cases}
	$$
	\item $(\phi)[f] : =  \phi$ for $\phi \in \{\bot, \top\}$.
	
	\item $(\phi \circ \psi)[f] : = \phi[f]\circ\psi[f]$, for $\circ \in \{\wedge, \vee, \to, \ll\}$.
	
	\item $(Qx\phi)[f] : = Qx(\phi[f\upharpoonright(X \setminus \{x\})])$, for $x \in Var$ and $Q \in \{\forall, \exists\}$.
\end{itemize}
Since $dom(f)$ is always finite, we will write $\phi[c_1/x_1,\ldots,c_n/x_n]$ (or, alternatively, $\phi[\bar{c}_n/\bar{x}_n]$) in place of $\varphi[f]$, whenever $f$ is $\bar{x}_n \mapsto \bar{c}_n$. It is clear that every $\varphi[f]$ can be written in this format. An important particular case is when $dom(f) = \{x\}$, so that we can write $\phi[f]$ as $\phi[c/x]$ for the corresponding $c \in Const_\Sigma$.

The following lemma states that our substitution operations work as expected. We omit the straightforward but tedious inductive proof.
\begin{lemma}\label{L:subst-basic}
Let $\Sigma$ be a signature, let $\phi \in L_{\bar{x}_n}(\Sigma)$, let $\bar{x}_n \in Var^{\neq n}$, $\bar{y}_m \in Var^{\neq m}$, and $\bar{z}_k \in Var^{\neq k}$	be such that $\{\bar{z}_k\} = \{\bar{x}_n\}\setminus\{\bar{y}_m\}$. Moreover, let $\bar{c}_m \in (Const_\Sigma)^m$, and let $(i_1,\ldots,i_m)$ be a permutation of $(1,\ldots, m)$. Then the following statements hold:
\begin{enumerate}
	\item $\phi[\bar{c}_m/\bar{y}_m] \in L_{\bar{z}_k}(\Sigma)$.
	
	\item $\phi[\bar{c}_m/\bar{y}_m] = \phi[c_{i_1}/y_{i_1},\ldots, c_{i_m}/y_{i_m}]$.
	
	\item  $\phi[\bar{c}_m/\bar{y}_m] = \phi[c_{1}/y_{1}]\ldots[c_{m}/y_{m}]$.
	
	\item If $FV(\phi) \cap \{\bar{x}_n\} = \{x_{j_1},\ldots,x_{j_m}\}$, then $\phi[\bar{c}_n/\bar{x}_n] = \phi[c_{j_1}/x_{j_1},\ldots,c_{j_m}/x_{j_m}]$.
	
	\item If $\psi \in  L_{\bar{x}_n}(\Sigma \cup \{\bar{c}_m\})$, and $\{\bar{y}_m\} \cap (\{\bar{x}_n\} \cup BV(\psi)) = \emptyset$, then there exists a $\chi \in L_{(\bar{x}_n)^\frown(\bar{y}_m)}(\Sigma)$ such that $\chi[\bar{c}_m/\bar{y}_m] = \psi$.
	
	\item $\Theta_{\phi[\bar{c}_m/\bar{y}_m]} \subseteq \Theta_\phi \cup \{\bar{c}_m\}$.
\end{enumerate}
\end{lemma}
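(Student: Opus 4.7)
The plan is to verify each of the six statements by structural induction on $\phi$ (or, in item 5, on $\psi$), with base cases $\bot$, $\top$, and atomic formulas $P(\bar t_n)$, together with a uniform inductive treatment of the four binary connectives and a single quantifier case covering both $\forall$ and $\exists$. In every part, the only delicate clause is the quantifier case of the substitution definition, where $f$ is restricted to $X\setminus\{x\}$; everything else is notational bookkeeping.

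For items 1--4 I would first unpack the notation: $\phi[\bar c_m/\bar y_m]$ is by definition $\phi[f]$ for the function $f=\bar y_m\mapsto\bar c_m$, which is well defined precisely because $\bar y_m\in Var^{\neq m}$. Item 2 then collapses to the observation that the relation $\{(y_i,c_i)\mid 1\le i\le m\}$ is the same set of ordered pairs under any permutation of indices, so both sides invoke the same $f$. Item 1 is a direct induction tracking free variables: in the atomic case the free variables of $P(\bar s_n)$ are exactly $(\{\bar t_n\}\cap Var)\setminus\{\bar y_m\}$, and in the quantifier case the restriction $f\!\upharpoonright\!(X\setminus\{x\})$ still removes all free occurrences of every $y_i\neq x$. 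Item 3 is proved by induction on $m$: since the $y_i$ are pairwise distinct and the $c_i$ are constants (and hence contain no variables that could re-introduce a $y_j$), iterating single-variable substitutions coincides with the simultaneous one. Item 4 follows from the general principle, proved by a side induction, that substitution for a variable not free in $\phi$ leaves $\phi$ unchanged, so only the values of $f$ on $FV(\phi)$ matter.

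Item 5 is the only genuinely interesting step and, I expect, the main source of difficulty. Given $\psi$ together with the two disjointness conditions, I would construct $\chi$ by recursion on $\psi$: in the atomic case $P(\bar t_n)$ I replace every occurrence of each $c_i$ in $\bar t_n$ by $y_i$, which is safe because the hypothesis $\{\bar y_m\}\cap\{\bar x_n\}=\emptyset$ rules out any clash with variables already present in $\bar t_n$; the connective cases recurse directly. In the quantifier case $\psi=Qx\psi'$, the hypothesis $\{\bar y_m\}\cap BV(\psi)=\emptyset$ guarantees $x\notin\{\bar y_m\}$, so the restriction $f\!\upharpoonright\!(\{\bar y_m\}\setminus\{x\})$ equals $f$ itself; applying the inductive hypothesis to $\psi'$ yields $\chi'$ with $\chi'[\bar c_m/\bar y_m]=\psi'$, and $\chi:=Qx\chi'$ is as required, with $FV(\chi)$ contained in $\{\bar x_n\}\cup\{\bar y_m\}$ by item 1. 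Item 6 is a short induction whose only nontrivial case is atomic: the constants appearing in $\bar s_n$ are either already in $\bar t_n$ (and hence in $\Theta_\phi$) or belong to $\{\bar c_m\}$.

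The principal obstacle throughout is keeping track of the interaction between the substitution clause $(Qx\phi)[f]=Qx(\phi[f\!\upharpoonright\!(X\setminus\{x\})])$ and the hypotheses on free and bound variables. The two disjointness conditions on $\{\bar y_m\}$ in item 5 are exactly what is needed to make that restriction vacuous in the quantifier case and to prevent variable capture in the abstracted formula; identifying why each hypothesis is used at each step is the one non-mechanical aspect of the argument.
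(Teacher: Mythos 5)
Your proposal is correct and matches the paper's intent exactly: the paper omits this proof as a ``straightforward but tedious'' structural induction, which is precisely what you sketch, and your observation that part 2 is immediate because permuting the index set yields the same set of ordered pairs (hence the same function $f$) is the very remark the paper makes after the lemma. Your handling of the quantifier clause and of the two disjointness hypotheses in part 5 correctly identifies the only non-mechanical points.
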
 
Note that the second part of the lemma holds just by the notational convention, since permutations of the set of the ordered pairs define one and the same function. Moreover, the joint effect of the second and the third part is that one can break up $[\bar{c}_n/\bar{x}_n]$ into a finite set of parts of arbitrary size and then apply those parts to a given $\phi$ in arbitrary order without affecting the result of a substitution. The latter is something that does not hold for the substitutions of arbitrary terms but is valid in our case due to the restriction to constants.

The notion of substitution is necessary for a correct inductive definition of a sentence that is independent from the inductive definition of an arbitrary formula.
More precisely, let $\Sigma$ be a signature and let $c$ be any constant, perhaps outside $\Sigma$. Then $L_\emptyset(\Sigma)$ is the smallest subset $Sent(\Sigma)$ of $L(\Sigma)$ satisfying the following conditions:
\begin{itemize}
	\item $P(\bar{c}_n), \bot, \top \in Sent(\Sigma)$ for all $n \geq 1$, $P \in \Sigma_n$, and $\bar{c}_n \in (Const_\Sigma)^n$.
	
	\item If $\phi, \psi \in Sent(\Sigma)$, then $(\phi\circ\psi) \in Sent(\Sigma)$ for all $\circ \in \{\wedge, \vee, \to, \ll\}$.
	
	\item If $x \in Var$ and $\phi[c/x] \in Sent(\Sigma\cup \{c\})$, then $\forall x\phi, \exists x\phi \in Sent(\Sigma)$.
\end{itemize}

\subsection{Semantics}

For any given signature $\Sigma$, a constant domain intuitionistic Kripke $\Sigma$-model is a structure of the form $\mathcal{M} = (W, \prec, D, V, I)$ such that:
\begin{itemize}
	\item $W$ is a non-empty set of worlds, or nodes.
	
	\item The accessibility relation $\prec \subseteq W \times W$ is reflexive and transitive (i.e. a pre-order).
	
	
	\item $D$ is a non-empty set (or domain) of objects which is disjoint from $W$.
	
	\item The mapping $V$ is a function from $Pred_\Sigma\times W$ into the set $\bigcup_{n \geq 0}\mathscr{P}(D^n)$ such that, for every $n \geq 0$, every $P \in \Sigma_n$, and all $w, v \in W$, it is true that $V(P, w) \subseteq D^n$ and:
	$$
	w \prec v \Rightarrow V(P, w) \subseteq V(P, v).
	$$
		
	Given a $P \in \Sigma_n$, we will sometimes consider the unary projection functions of the form $V_P:W \to \mathscr{P}(D^n)$ arising from the binary function $V$. It is clear that we may view $V$ as the union of the corresponding family of the unary functions, that is to say, that we can assume $V = \bigcup_{P \in Pred_\Sigma}V_P$.
	
	\item $I: Const_\Sigma\to D$ is the function interpreting the individual constants.
\end{itemize}
In what follows, we will always assume the particular case when $I$ is just the identity function on $Const_\Sigma$; in other words, we will only consider the constants that are names of themselves. 

An interesting particular case arises when $P \in \Sigma_0$. In this case, our definition says that $V_P:W \to \mathscr{P}(D^0)$, but $\mathscr{P}(D^0) = \mathscr{P}(\{\emptyset\}) = \{\emptyset, \{\emptyset\}\} = \{0,1\} = 2$, given our identification of natural numbers with the finite ordinals. Therefore, $V_{P}$ in this case is, in effect, a function from $W$ to $\{0, 1\}$, and $V_{P}$ returns $1$ for a given $w$ iff $P^0$ holds at $w$ in $\mathcal{M}$.

Since we will only consider in this paper the models of the type described above, we will simply call them $\Sigma$-models.

When we use subscripts and other decorated model notations, we strive for consistency in this respect. Some examples of this notational principle are given below:
$$
\mathcal{M} = (W, \prec, D, V, I),\,
\mathcal{M}' = (W', \prec', D', V', I'),\, \mathcal{M}_n = (W_n, \prec_n, D_n, V_n, I_n)
$$
As is usual, we denote the reduct of a $\Sigma$-model
$\mathcal{M}$ to a signature $\Theta \subseteq \Sigma$ by
$\mathcal{M}\upharpoonright\Theta$.

As for the reverse operation of expanding a model of a smaller signature to a model of a larger signature, if $\mathcal{M}$ is a $\Sigma$-model, $P^{n_1}_1, \ldots P^{n_k}_k \notin \Sigma$ are pairwise distinct and $\mathbf{A} \subseteq D$, then, for any given sequence of $\pi_1:W\to \mathscr{P}(D^{n_1}), \ldots, \pi_k:W\to \mathscr{P}(D^{n_k})$ of functions monotonic relative to $\prec$, we will denote by $(\mathcal{M}; \bar{P}_k\mapsto \bar{\pi}_k; \mathbf{A})$ the unique $\Sigma \cup \{ P^{n_1}_1, \ldots P^{n_k}_k\}\cup \mathbf{A}$-model $\mathcal{M}'$ such that $\mathcal{M}'\upharpoonright\Sigma = \mathcal{M}$, $V'(P^{n_i}_i, w) = \pi_i(w)$ for all $w \in W$, and $I'(a) = a$ for all $1 \leq i \leq k$ and all $a \in \mathbf{A}$. In case $k = 0$ or $\mathbf{A} = \emptyset$, we will write $(\mathcal{M};\mathbf{A})$ or $(\mathcal{M}; \bar{P}_k\mapsto \bar{\pi}_k)$, respectively.

The semantics is given by the following forcing relation defined by induction on the construction of a sentence. If $\Sigma$ is a signature, $\mathcal{M}$ is a $\Sigma$-model, $w \in W$, and $\phi \in L_\emptyset(\Sigma \cup D)$ then we write that $(\mathcal{M}; D), w \models \phi$ and say that $\phi$ is true at $w$ in $(\mathcal{M}; D)$ iff it follows from the following clauses:
\begin{align*}
	(\mathcal{M}; D), w\models P(\bar{a}_n) &\Leftrightarrow \bar{a}_n \in V_P(w) &&P\in \Sigma_n,\,\bar{a}_n\in D^n\\
	(\mathcal{M}; D), w\models \phi \wedge \psi &\Leftrightarrow (\mathcal{M}; D), w\models \phi\text{ and }(\mathcal{M}; D), w\models \psi\\
	\mathcal{M}, w\models \phi \vee \psi &\Leftrightarrow (\mathcal{M}; D), w\models \phi\text{ or }(\mathcal{M}; D), w\models \psi\\
	(\mathcal{M}; D), w\models \phi \to \psi &\Leftrightarrow (\forall v\succ w)((\mathcal{M}; D), v\not\models \phi\text{ or }(\mathcal{M}; D), w\models \psi)\\
	(\mathcal{M}; D), w\models \phi \ll \psi &\Leftrightarrow (\exists v\prec w)((\mathcal{M}; D), v\models \phi\text{ and }(\mathcal{M}; D), w\not\models \psi)\\ 
	(\mathcal{M}; D), w\models \forall x\phi &\Leftrightarrow (\forall a\in D)((\mathcal{M}; D), w\models \phi[a/x])\\
	(\mathcal{M}; D), w\models \exists x\phi &\Leftrightarrow (\exists a\in D)((\mathcal{M}; D), w\models \phi[a/x])	   
\end{align*}
Note that the assumption of constant domains allows for a simplification of the clause treating the universal quantifier. The following lemma then spells out the consequences of our definition for the defined connective $\neg$:
\begin{lemma}\label{L:neg-and-iff}
	If $\Sigma$ is a signature, $\mathcal{M}$ is a $\Sigma$-model, $w \in W$, and $\phi \in L_\emptyset(\Sigma \cup D)$ then we have:
	\begin{align*}
	(\mathcal{M}; D), w\models \neg\phi &\Leftrightarrow (\forall v\succ w)((\mathcal{M}; D), v\not\models \phi)
	\end{align*}
\end{lemma}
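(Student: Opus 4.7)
The plan is to unfold the abbreviation $\neg\phi := \phi \to \bot$ and then to apply the semantic clause for $\to$ already given in the definition of the forcing relation. Since no clause in the inductive definition makes $\bot$ true at any world, the disjunct ``$(\mathcal{M}; D), v \models \bot$'' is vacuously false, and the clause for $\to$ collapses to exactly the desired condition.

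More concretely, first I would invoke the definition to rewrite $(\mathcal{M}; D), w \models \neg\phi$ as $(\mathcal{M}; D), w \models \phi \to \bot$. Second, I would apply the clause for implication, yielding
\[
(\forall v \succ w)\bigl((\mathcal{M}; D), v \not\models \phi \text{ or } (\mathcal{M}; D), v \models \bot\bigr).
\]
Third, I would note that since $\bot$ does not appear on the left-hand side of any clause in the inductive definition of $\models$, we have $(\mathcal{M}; D), v \not\models \bot$ for every $v \in W$, and therefore the second disjunct may be dropped, leaving precisely $(\forall v \succ w)((\mathcal{M}; D), v \not\models \phi)$, as required. The converse direction is immediate by reversing these three steps.

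There is essentially no obstacle here: the argument is a direct unfolding of definitions, and the only thing worth flagging is the implicit but standard convention that $\bot$ is never forced, which is what makes the second disjunct in the implication clause disappear. One minor remark I would include is that $\neg\phi \in L_\emptyset(\Sigma \cup D)$ whenever $\phi \in L_\emptyset(\Sigma \cup D)$, so the forcing relation is indeed defined on $\neg\phi$ at $w$ in $(\mathcal{M}; D)$, and hence the equivalence is meaningful.
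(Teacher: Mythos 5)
Your proof is correct and is exactly the argument the paper has in mind: the paper simply states that the lemma is ``immediate by the definition'' and omits the details, which amount to unfolding $\neg\phi$ as $\phi\to\bot$, applying the clause for $\to$, and observing that $\bot$ is never forced. Your additional remarks (that $\bot$ has no truth clause and that $\neg\phi$ remains a sentence of the right signature) are the standard implicit conventions, so nothing is missing.
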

The proof is immediate by the definition and is omitted.

We then extend this semantics to arbitrary models by setting that for a $\Sigma$-model $\mathcal{M}$, and for a $\phi \in L_\emptyset(\Sigma)$, we have $\mathcal{M}, w\models \phi$ iff $(\mathcal{M};D), w\models \phi$. The following theorem is then immediate:
\begin{theorem}\label{T:expansion-property}
Let $\Sigma, \Theta$ be signatures such that $\Sigma \subseteq \Theta$, let $\mathcal{M}$ be a $\Theta$-model, let $w \in W$, and let $\phi \in L_\emptyset(\Sigma)$. Then $\mathcal{M}, w \models \phi$ iff $\mathcal{M}\upharpoonright\Sigma, w \models \phi$. 	
\end{theorem}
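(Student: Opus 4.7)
The plan is to obtain the result by structural induction, but first to strengthen the statement so the induction can close on quantifiers. Observe that $\mathcal{M}\upharpoonright\Sigma$ has exactly the same set of worlds $W$, accessibility relation $\prec$, and domain $D$ as $\mathcal{M}$; it differs only by forgetting the valuations $V_P$ and interpretations $I(c)$ for predicates $P$ and constants $c$ in $\Theta\setminus\Sigma$. Consequently, I would first establish by induction on the construction of $\phi$ that for every $\phi\in L_\emptyset(\Sigma\cup D)$ and every $w\in W$,
$$(\mathcal{M};D),w\models\phi\;\Longleftrightarrow\;(\mathcal{M}\upharpoonright\Sigma;D),w\models\phi.$$
The stated theorem then falls out immediately, since $L_\emptyset(\Sigma)\subseteq L_\emptyset(\Sigma\cup D)$ and $\mathcal{M},w\models\phi$ is defined to mean $(\mathcal{M};D),w\models\phi$ (and similarly for $\mathcal{M}\upharpoonright\Sigma$, whose domain is still $D$).

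For the atomic case $\phi=P(\bar{a}_n)$ with $P\in\Sigma_n$ and $\bar{a}_n\in D^n$, both sides unfold to $\bar{a}_n\in V_P(w)$, because the reduct preserves $V_P$ for every $P\in\Sigma$; the cases $\phi\in\{\bot,\top\}$ are trivial. The cases of $\wedge,\vee,\to$ and $\ll$ close by direct application of the induction hypothesis to the immediate sub-sentences—the clauses for $\to$ and $\ll$ additionally quantify over $\prec$-successors resp.\ $\prec$-predecessors of $w$, but $\prec$ and $W$ are left untouched by $\upharpoonright\Sigma$. For the quantifier cases $\forall x\phi$ and $\exists x\phi$, Lemma \ref{L:subst-basic}(6) guarantees that $\phi[a/x]\in L_\emptyset(\Sigma\cup D)$ for every $a\in D$, so the induction hypothesis applies to each such instance; the equivalence then drops out of the quantifier clauses, which range only over the common domain $D$.

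There is no genuine obstacle here: this is the familiar ``reducts preserve truth'' lemma, and the one mildly non-obvious point is just the choice of induction loading—one must formulate the induction over $L_\emptyset(\Sigma\cup D)$ rather than $L_\emptyset(\Sigma)$, because the forcing clauses for $\forall$ and $\exists$ substitute elements of $D$ back into the matrix and thereby temporarily leave $L_\emptyset(\Sigma)$. Once that is arranged, every case is a line or two of definition-chasing.
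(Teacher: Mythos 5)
Your proof is correct, and it is essentially the argument the paper has in mind when it declares Theorem \ref{T:expansion-property} ``immediate'' and omits the details: the reduct leaves $W$, $\prec$, and $D$ untouched, so a routine induction on sentences settles every clause. Your one substantive observation --- loading the induction over $L_\emptyset(\Sigma\cup D)$ so that the quantifier clauses, which substitute elements of $D$, stay inside the inductive class --- is exactly the right way to make the omitted argument precise.
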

Theorem \ref{T:expansion-property} is often referred to as Expansion Property in the literature on abstract model theory.

Next, we adopt the standard definitions of validity, satisfiability and of semantic consequence relation. In particular, given a signature $\Sigma$, a $\Gamma \cup \{\phi\} \subseteq L_\emptyset(\Sigma)$, a $\Sigma$-model $\mathcal{M}$, and a $w \in W$, we say that $(\mathcal{M}, w)$ \textit{satisfies} $\Gamma$, and write $\mathcal{M}, w\models \Gamma$ iff we have $\mathcal{M}, w\models \psi$ for all $\psi \in \Gamma$. Furthermore, we say that $\phi$ is a \textit{consequence} of $\Gamma$ and write $\Gamma \models \phi$ iff for every $\Sigma$-model $\mathcal{M}$, and every $w \in W$, $\mathcal{M}, w\models \Gamma$ implies that $\mathcal{M}, w\models \phi$. In case $\Gamma = \{\psi\}$ for some $\psi \in L_\emptyset(\Sigma)$, we will omit the brackets and simply write $\psi \models \phi$, and in case $\Gamma = \emptyset$ we will write $\models \phi$ omitting $\Gamma$ altogether.

Having now both the language and its semantic apparatus in place, we can speak of the logical system $\mathsf{QBIL}$ of  bi-intuitionistic predicate logic. As a logic, $\mathsf{QBIL}$ can be presented as a fragment of  classical first-order logic $\mathsf{QCL}$ by means of an appropriate standard translation. On the other hand, $\mathsf{QBIL}$ can be seen as resulting from  intuitionistic predicate logic $\mathsf{QIL}$ by first restricting its semantics to the constant-domain Kripke models and then adding $\ll$ as the new connective to the language. An even closer relation exists between $\mathsf{QBIL}$ and the intuitionistic logic of constant domains $\mathsf{CD}$ which can be viewed as the $\ll$-free fragment of $\mathsf{QBIL}$.

\subsection{Bi-asimulations}
An important semantical concept related to  bi-intuitionistic predicate logic is the notion of  first-order bi-asimulation. This notion can be defined as follows:
\begin{definition}\label{D:bi-asimulation}
Let $\Sigma$ be a signature, let $\mathcal{M}_0$, $\mathcal{M}_1$ be $\Sigma$-models. A non-empty relation $A$ is called a first-order bi-asimulation iff the following conditions are satisfied for all $i,j$ such that $\{i,j\} = \{0,1\}$, and for all $n \in \omega$, all $P \in \Sigma_m$, all (maybe non-distinct) $j_1, \ldots, j_m \leq n$, all $(w)^\frown(\bar{a}_n) \in W_i\times (D_i)^n$ and all $(v)^\frown(\bar{b}_n) \in W_j\times (D_j)^n$ such that $(w)^\frown(\bar{a}_n)\mathrel{A}(v)^\frown(\bar{b}_n)$:
\begin{align}
&A \subseteq \bigcup_{n \in \omega}((W_0\times (D_0)^n) \times (W_1\times (D_1)^n) )\cup ((W_1\times (D_1)^n) \times (W_0\times (D_0)^n) )\label{E:type}\tag{type}\\
 &(\mathcal{M}_i; D_i), w\models P(a_{j_1},\ldots,a_{j_m}) \Rightarrow (\mathcal{M}_j; D_j), v\models P(b_{j_1},\ldots,b_{j_m})\label{E:atom}\tag{atom}\\
&(\forall v_0 \succ_j v)(\exists w_0\succ_i w)((w_0)^\frown(\bar{a}_n)\mathrel{A}(v_0)^\frown(\bar{b}_n)\,\&\,(v_0)^\frown(\bar{b}_n)\mathrel{A}(w_0)^\frown(\bar{a}_n))\label{E:back}\tag{back}\\
&(\forall w_0 \prec_i w)(\exists v_0\prec_j v)((w_0)^\frown(\bar{a}_n)\mathrel{A}(v_0)^\frown(\bar{b}_n)\,\&\,(v_0)^\frown(\bar{b}_n)\mathrel{A}(w_0)^\frown(\bar{a}_n))\label{E:forth}\tag{forth}\\
&(\forall b \in D_j)(\exists a\in D_i)((w)^\frown(\bar{a}_n)^\frown(a)\mathrel{A}(v)^\frown(\bar{b}_n)^\frown(b))\label{E:left}\tag{left}\\
&(\forall a \in D_i)(\exists b\in D_j)((w)^\frown(\bar{a}_n)^\frown(a)\mathrel{A}(v)^\frown(\bar{b}_n)^\frown(b))\label{E:right}\tag{right}
\end{align}
\end{definition} 
In the case where we have $(w)^\frown(\bar{a}_n)\mathrel{A}(v)^\frown(\bar{b}_n)$ in the assumptions of Definition \ref{D:bi-asimulation}, we will say that the first-order bi-asimulation $A$ is from $(w)^\frown(\bar{a}_n)$ to $(v)^\frown(\bar{b}_n)$.

Since we are not going to discuss any other kinds of bi-asimulation relations in this paper, we will omit in what follows the qualification `first-order' and will simply speak of bi-asimulations. The following lemma shows that first-order bi-intuitionistic formulas are preserved under bi-asimulations:
\begin{lemma}\label{L:preserve}
	Let $\Sigma$ be a signature, let $\mathcal{M}_0$, $\mathcal{M}_1$ be $\Sigma$-models, let $n \in \omega$, let $i,j$ such that $\{i,j\} = \{0,1\}$, let $(w)^\frown(\bar{a}_n) \in W_i\times (D_i)^n$ and $(v)^\frown(\bar{b}_n) \in W_j\times (D_j)^n$, and let $A$ be a bi-asimulation between $\mathcal{M}_0$ and $\mathcal{M}_1$ such that $(w)^\frown(\bar{a}_n)\mathrel{A}(v)^\frown(\bar{b}_n)$. Then for every tuple $\bar{x}_n \in Var^{\neq n}$ and every $\phi \in L_{\bar{x}_n}(\Sigma)$ it is true that:
	$$
	(\mathcal{M}_i; D_i), w\models \phi[\bar{a}_n/\bar{x}_n] \Rightarrow (\mathcal{M}_j; D_j), v\models  \phi[\bar{b}_n/\bar{x}_n].
	$$
\end{lemma}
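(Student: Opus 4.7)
My plan is a straightforward induction on the construction of $\phi \in L_{\bar{x}_n}(\Sigma)$, carried out uniformly in the orientation $\{i,j\} = \{0,1\}$ so that the induction hypothesis is immediately applicable at each $A$-related pair we encounter. At the outset I absorb the finitely many constants from $\Sigma$ occurring in $\phi$ into the tuples $\bar{a}_n,\bar{b}_n$, so that after substitution every argument of every atomic subformula is indexed by some $j_k \leq n$; the base case $\phi = P(t_1,\ldots,t_m)$ then reduces directly to the clause (atom), while $\phi \in \{\bot,\top\}$ is trivial. The Boolean cases $\wedge,\vee$ are settled by the induction hypothesis applied to the subformulas at the very same pair $(w)^\frown(\bar{a}_n)\mathrel{A}(v)^\frown(\bar{b}_n)$, and the quantifier cases use (left) and (right) in the standard way: given $\forall x\psi$ true at $w$, each $b \in D_j$ is matched by some $a \in D_i$ via (left), $\forall$-instantiation on the assumption yields $\psi[\bar{a}_n/\bar{x}_n][a/x]$ at $w$, and the induction hypothesis on $\psi$ with the extended tuples transports this truth to $v$; $\exists x\psi$ is analogous, with (right) matching the semantic witness $a \in D_i$ by some $b \in D_j$.

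The substantive cases are $\to$ and $\ll$, both of which exploit the two-sided conclusions of (back) and (forth). For $\phi = \psi \to \theta$, assuming truth of $\phi[\bar{a}_n/\bar{x}_n]$ at $w$ and given an arbitrary $v_0 \succ_j v$ forcing $\psi[\bar{b}_n/\bar{x}_n]$, the clause (back) supplies some $w_0 \succ_i w$ together with both $(w_0)^\frown(\bar{a}_n)\mathrel{A}(v_0)^\frown(\bar{b}_n)$ and its converse; the induction hypothesis on $\psi$ applied in the direction $j \to i$ transports truth of $\psi$ back to $w_0$, the assumption then yields $\theta[\bar{a}_n/\bar{x}_n]$ at $w_0$, and a final application of the induction hypothesis on $\theta$ in the direction $i \to j$ returns $\theta[\bar{b}_n/\bar{x}_n]$ to $v_0$. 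The co-implication case $\phi = \psi \ll \theta$ is strictly dual: the semantic predecessor $w_0 \prec_i w$ witnessing $\psi \ll \theta$ at $w$ is matched via (forth) by some $v_0 \prec_j v$ carrying a bi-directional $A$-link, at which the induction hypothesis on $\psi$ delivers $\psi[\bar{b}_n/\bar{x}_n]$ at $v_0$; the failure of $\theta[\bar{a}_n/\bar{x}_n]$ at $w$ is transferred to $v$ via the contrapositive of the induction hypothesis on $\theta$ applied in the converse direction of the $A$-link between $(w)^\frown(\bar{a}_n)$ and $(v)^\frown(\bar{b}_n)$.

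The main obstacle is precisely this direction-inversion in the $\to$ and $\ll$ clauses: the induction hypothesis must be invoked backwards along an $A$-link, at a newly produced pair in the implication case and at the original pair in the co-implication case. The two-sided form of the conclusions of (back) and (forth), combined with the symmetric quantification over $\{i,j\}=\{0,1\}$ in Definition~\ref{D:bi-asimulation}, is exactly the device that legalises this inversion and marks the essential feature distinguishing bi-asimulations from the one-directional asimulations of ordinary intuitionistic logic.
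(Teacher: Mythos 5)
Your overall strategy---induction on $\phi$ with the orientation of $\{i,j\}$ kept symmetric, \eqref{E:atom} for the base case, \eqref{E:left}/\eqref{E:right} for the quantifiers, and \eqref{E:back} with its two-sided conclusion for $\to$---is the paper's argument (the paper runs the $\to$ case contrapositively, which is immaterial). The genuine problem is your co-implication case. You locate the failure of $\theta[\bar{a}_n/\bar{x}_n]$ at $w$ itself and then transfer it to $v$ via ``the contrapositive of the induction hypothesis applied in the converse direction of the $A$-link between $(w)^\frown(\bar{a}_n)$ and $(v)^\frown(\bar{b}_n)$''. That converse link is not among your data: the hypothesis gives only $(w)^\frown(\bar{a}_n)\mathrel{A}(v)^\frown(\bar{b}_n)$, Definition \ref{D:bi-asimulation} does not require $A$ to be symmetric, and the concrete bi-asimulation $\mathbb{A}$ of Definition \ref{D:A-bi-asimulation} is in fact not symmetric; so the induction hypothesis cannot be invoked in that direction at the original pair. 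Two-sided links are only guaranteed at the \emph{new} pair produced by \eqref{E:back}/\eqref{E:forth}, and that is where the dual argument must take place: with the intended clause for $\ll$ (the displayed clause contains a typo---as with $\to$, the second conjunct is to be evaluated at the accessible world, which is how the paper's own Case 1 reads the $\to$ clause), a witness $w_0\prec_i w$ satisfies $\psi[\bar{a}_n/\bar{x}_n]$ and falsifies $\theta[\bar{a}_n/\bar{x}_n]$; \eqref{E:forth} supplies $v_0\prec_j v$ linked to $w_0$ in both directions, the induction hypothesis on $\psi$ along one link gives $\psi[\bar{b}_n/\bar{x}_n]$ at $v_0$, and its contrapositive along the other link gives the failure of $\theta[\bar{b}_n/\bar{x}_n]$ at $v_0$, whence $v\models(\psi\ll\theta)[\bar{b}_n/\bar{x}_n]$. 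As written, your step needs symmetry of $A$ and does not follow.

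A smaller point: the preliminary move of ``absorbing'' the constants of $\Sigma$ occurring in $\phi$ into the tuples is also not licensed. To apply the induction hypothesis to the lengthened tuples you would need $(w)^\frown(\bar{a}_n)^\frown(c)\mathrel{A}(v)^\frown(\bar{b}_n)^\frown(c)$, but \eqref{E:left}/\eqref{E:right} only provide \emph{some} matching element, not $c$ itself, and nothing in Definition \ref{D:bi-asimulation} constrains how constants are matched. The paper's proof simply treats atoms whose arguments are among the substituted variables (and in the intended application $\Sigma=\{P^1,Q^1\}$ is purely relational), so this does not affect the result, but as a reduction step in your write-up it does not go through.
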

\begin{proof}
	By induction on the construction of $\phi$. 
	
	\textit{Induction basis}. If $\phi = P(x_{j_1},\ldots,x_{j_m})$ for some (maybe non-distinct) $j_1, \ldots, j_m \leq n$ and some $P \in \Sigma_m$, then the statement of the Lemma follows from condition \eqref{E:atom} of Definition \ref{D:bi-asimulation} and the fact that we have $\phi[\bar{a}_n/\bar{x}_n] = P(a_{j_1},\ldots,a_{j_m})$ and $\phi[\bar{b}_n/\bar{x}_n] = P(b_{j_1},\ldots,b_{j_m})$.
	
	\textit{Induction step}. The cases when $\phi = \psi \wedge \theta$ and when  $\phi = \psi \vee \theta$ are straightforward. We consider the remaining cases:
	
	\textit{Case 1}. $\phi = \psi \to \theta$. If $(\mathcal{M}_j; D_j), v\not\models (\psi \to \theta)[\bar{b}_n/\bar{x}_n]$, then $(\mathcal{M}_j; D_j), v\not\models \psi[\bar{b}_n/\bar{x}_n] \to \theta[\bar{b}_n/\bar{x}_n]$, hence there exists a $v_0 \mathrel{\succ_j}v$ such that both $(\mathcal{M}_j; D_j), v_0\models \psi[\bar{b}_n/\bar{x}_n]$ and $(\mathcal{M}_j; D_j), v_0\not\models \theta[\bar{b}_n/\bar{x}_n]$. But then, by condition \eqref{E:back}, there must be a $w_0 \mathrel{\succ_i}w$ such that both $(w_0)^\frown(\bar{a}_n)\mathrel{A}(v_0)^\frown(\bar{b}_n)$ and $(v_0)^\frown(\bar{b}_n)\mathrel{A}(w_0)^\frown(\bar{a}_n)$. Now the Induction Hypothesis implies that both
	$(\mathcal{M}_i; D_i), w_0\models \psi[\bar{a}_n/\bar{x}_n]$ and $(\mathcal{M}_i; D_i), w_0\not\models \theta[\bar{a}_n/\bar{x}_n]$. Thus we get that:
	$$
	(\mathcal{M}_i; D_i), w\not\models (\psi[\bar{a}_n/\bar{x}_n] \to \theta[\bar{a}_n/\bar{x}_n]) = (\psi \to \theta)[\bar{a}_n/\bar{x}_n].
	$$
	
	\textit{Case 2}. $\phi = \psi \ll \theta$. The case is dual to Case 1.
	
	\textit{Case 3}. $\phi = \forall x\psi$. Then we might have $x \in \{\bar{x}_n\}$, but we can always avoid this inconvenience by choosing an $m \geq 0$, and some pairwise distinct $j_1,\ldots, j_m \leq n$ such that $\{x_{j_1},\ldots,x_{j_m}\} = FV(\exists x\phi)\cap \{\bar{x}_n\}$. For this $m$-tuple, we will have, $x \notin \{x_{j_1},\ldots,x_{j_m}\}$, and Lemma \ref{L:subst-basic}.4 implies that:
	$$
	(\forall x\psi)[\bar{b}_n/\bar{x}_n] = (\forall x\psi)[b_{j_1}/x_{j_1},\ldots,b_{j_m}/x_{j_m}] = \forall x(\psi[b_{j_1}/x_{j_1},\ldots,b_{j_m}/x_{j_m}]).
	$$
	But then we reason as follows. If $(\mathcal{M}_j; D_j), v\not\models (\forall x\psi)[\bar{b}_n/\bar{x}_n] = \forall x(\psi[b_{j_1}/x_{j_1},\ldots,b_{j_m}/x_{j_m}])$, then we must have $(\mathcal{M}_j; D_j), v\not\models \psi[b_{j_1}/x_{j_1},\ldots,b_{j_m}/x_{j_m}][b/x]$ for some $b \in D_j$, hence $(\mathcal{M}_j; D_j), v\not\models \psi[b_{j_1}/x_{j_1},\ldots,b_{j_m}/x_{j_m}, b/x]$ by Lemma \ref{L:subst-basic}. Moreover, condition \eqref{E:left} of the Definition \ref{D:bi-asimulation} implies that, for some $a \in D_i$ we must have $(w)^\frown(\bar{a}_n)^\frown(a)\mathrel{A}(v)^\frown(\bar{b}_n)^\frown(b)$. Since also $FV(\psi)\subseteq \{x_{j_1},\ldots,x_{j_m}\} \cup \{x\}$, the Induction Hypothesis is applicable and yields that $(\mathcal{M}_i; D_i), w\not\models \psi[a_{j_1}/x_{j_1},\ldots,a_{j_m}/x_{j_m}, a/x]$, whence, further, that $(\mathcal{M}_i; D_i), w\not\models \psi[a_{j_1}/x_{j_1},\ldots,a_{j_m}/x_{j_m}][a/x]$ by Lemma \ref{L:subst-basic}. 
	
	But then it follows (again applying Lemma \ref{L:subst-basic}.4) that 
	$$
	(\mathcal{M}_i; D_i), w\not\models \forall x(\psi[a_{j_1}/x_{j_1},\ldots,a_{j_m}/x_{j_m}]) = (\forall x\psi)[\bar{a}_n/\bar{x}_n].
	$$
	
	\textit{Case 4}. $\phi = \exists x\psi$. The case is dual to Case 3.
\end{proof}

\textbf{Remark 1}. It is rather straightforward to show, by combining the arguments given in \cite{ba1} and \cite{o} that the invariance under bi-asimulations given in Definition \ref{D:bi-asimulation} defines, for any given signature $\Sigma$, the set of natural standard translations of first-order bi-intuitionistic formulas into classical first-order logic. Such a proof, however, is beyond the scope of the present paper.

\section{Interpolation fails in the  bi-intuitionistic setting}\label{2}

\subsection{Craig Interpolation Property and Mints's Counterexample}
The Craig Interpolation Property, which was initially defined for $\mathsf{QCL}$, can be considered for $\mathsf{QBIL}$ without any changes in the original definition. Therefore, we will say that $\mathsf{QBIL}$ has \textit{Craig Interpolation Property} (CIP) iff for any signature $\Sigma$, and any $(\phi \to \psi) \in L_\emptyset(\Sigma)$ such that $\models \phi \to \psi$, there exists a $\theta \in L_\emptyset(\Theta_\phi\cap \Theta_\psi)$ such that both  $\models \phi \to \theta$ and  $\models \theta \to \psi$. Any such $\theta$ is then called an \textit{interpolant} for $\phi \to \psi$.

Although CIP holds for both $\mathsf{QCL}$ and $\mathsf{QIL}$, it is known to fail for $\mathsf{CD}$. The counterexample to CIP for the latter logic was published in \cite{mou} and is obtained by setting
$$
\phi:= \forall x\exists y(P(y) \wedge (Q(y) \to R(x))) \wedge \neg\forall xR(x),
$$
and
$$
\psi:= \forall x(P(x) \to (Q(x) \vee S)) \to S
$$
in the definition for CIP given in the previous paragraph. Under these particular settings (fixed throughout the remaining part of this paper), we will call $\phi \to \psi$ \textit{Mints's Counterexample} to CIP. It was shown in \cite{mou} that $\phi \to \psi$ is valid in $\mathsf{CD}$, but for no $\ll$-free $\theta \in  L_\emptyset(\Theta_\phi\cap \Theta_\psi) = L_\emptyset(\{P^1, Q^1\})$ both $\phi \to \theta$ and  $\theta \to \psi$ are valid in $\mathsf{CD}$.

Since $\mathsf{CD}$ is exactly the $\ll$-free fragment of $\mathsf{QBIL}$, the arguments of \cite{mou} also show that $\models\phi \to \psi$. These arguments, however, are insufficient to show that $\phi \to \psi$ lacks an interpolant in $\mathsf{QBIL}$ due to the richer language of the latter logic. The main purpose of the present paper is to close this gap and to show that Mints's Counterexample works for $\mathsf{QBIL}$ as well, and that therefore $\mathsf{QBIL}$ lacks CIP. The next subsection contains a proof of these claims.  

\subsection{Refuting CIP}
Our proof is an improvement on the proof given in \cite{mou}; it uses many of the same ideas albeit their application to this case requires several adjustments and a slight generalization.

We start by setting $\Sigma := \{P^1, Q^1\}$. We will describe two particular $\Sigma$-models and a bi-asimulation relation between them, and then we will show how to extend them to models satisfying $\phi$ and failing $\psi$, respectively. The rest of the argument will then be exactly as in \cite{mou}.

We start with the definitions of the building blocks for our models:
\begin{definition}\label{D:quasi-partitions}
	\begin{enumerate}
		\item A {\em quasi-partition} $(A,B,C)$ is defined by the
		following conditions:
		\begin{enumerate}
			\item $A \cup B \cup C = \mathbb{N}$; 
			\item $A,B,C$ are pairwise
			disjoint; 
			\item $\mid A\mid = \mid C\mid = \omega$; 
			\item $\mid B\mid \in \{\emptyset, \omega\}$.
		\end{enumerate}
		\item A relation $\trianglelefteq$ on the set of all
		quasi-partitions is defined by
		\[
		(A,B,C) \trianglelefteq (D,E,F) \Leftrightarrow [ A \subseteq D \wedge F \subseteq C].
		\]
	\end{enumerate}
\end{definition}
We immediately fix the following corollary to Definition \ref{D:quasi-partitions}:
\begin{corollary}\label{C:inclusion}
	For any quasi-partitions $(A,B,C)$ and $(D,E,F)$, if $(A,B,C) \trianglelefteq (D,E,F)$, then $A \cup B \subseteq D \cup E$. 
\end{corollary}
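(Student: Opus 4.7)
The plan is to prove both inclusions $A \subseteq D \cup E$ and $B \subseteq D \cup E$ separately and then take their union. The first inclusion is essentially immediate from the definition of $\trianglelefteq$, which directly gives $A \subseteq D$; hence $A \subseteq D \cup E$. So the content of the corollary is really the second inclusion, $B \subseteq D \cup E$.

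For the inclusion $B \subseteq D \cup E$, I would argue by locating $B$ inside $\mathbb{N}$ and then subtracting off the part of $\mathbb{N}$ outside $D \cup E$. Since $(D,E,F)$ is a quasi-partition, we have $D \cup E \cup F = \mathbb{N}$ with pairwise disjoint components, so the complement of $D \cup E$ in $\mathbb{N}$ is exactly $F$. Thus it suffices to show that $B \cap F = \emptyset$. This is where the assumption $F \subseteq C$ coming from $\trianglelefteq$ enters: combined with the disjointness of $B$ and $C$ in the quasi-partition $(A,B,C)$, we get $B \cap F \subseteq B \cap C = \emptyset$, as required.

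Putting the two inclusions together yields $A \cup B \subseteq D \cup E$. There is no real obstacle here; the only small point to watch is that one needs to invoke disjointness from both quasi-partitions (the disjointness of $B$ and $C$ in the first, and the fact that the complement of $D \cup E$ in $\mathbb{N}$ is precisely $F$ in the second). Given how short the argument is, I would present it inline rather than as a displayed derivation.
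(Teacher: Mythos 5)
Your proof is correct and rests on the same key observation as the paper's: since $(D,E,F)$ is a quasi-partition, $F$ is exactly $\mathbb{N}\setminus(D\cup E)$, so the clause $F\subseteq C$ does all the work. The paper is marginally slimmer: it also notes $C=\mathbb{N}\setminus(A\cup B)$ and obtains $A\cup B\subseteq D\cup E$ in one contraposition of $F\subseteq C$, never invoking $A\subseteq D$ or splitting the union as you do; both arguments are equally valid.
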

\begin{proof}
	If $(A,B,C) \trianglelefteq (D,E,F)$, then $F \subseteq C$, in other words, $\mathbb{N}\setminus (D \cup E) \subseteq \mathbb{N}\setminus (A \cup B)$. Therefore, by contraposition,  $A \cup B \subseteq D \cup E$. 
\end{proof}

We now fix two special quasi-partitions $\mathbf{v} =
(\mathbf{v}_1, \mathbf{v}_2, \mathbf{v}_3) = ( 3 \mathbb{N}, 3
\mathbb{N} + 1, 3 \mathbb{N} + 2 )$ and $\mathbf{w} =
(\mathbf{w}_1, \mathbf{w}_2, \mathbf{w}_3) = ( 2 \mathbb{N},
\emptyset, 2 \mathbb{N} + 1)$. Our (fixed till the end of the present section) $\Sigma$-models $\mathcal{M}_1$ and $\mathcal{M}_2$ are then as follows:
\begin{definition}\label{D:M}
	The structures $\mathcal{M}_1$ and $\mathcal{M}_2$ are such that:
	\begin{enumerate}
		\item $W_1 = W_2 = W$ is the set of all quasi-partitions.

		
		\item $\prec_2 := \trianglelefteq$, and, for all $(A,B,C), (D,E,F)\in W$, we have $(A,B,C)\mathrel{\prec_1}(D,E,F)$ iff $(A,B,C)\mathrel{\trianglelefteq}(D,E,F)$ and 
		$$
		(\mathbf{v} \mathrel{\trianglelefteq} (A, B, C) \,\&\,  \mid B \cap \mathbf{v}_2\mid = \omega) \Rightarrow \mid E \cap \mathbf{v}_2\mid = \omega.
		$$
		
		\item $D_1 = D_2 = \mathbb{N}$.
		
		\item $V_i(P,(A,B,C)) = A \cup B$, $V_i(Q,(A,B,C)) = A$ for all $(A,B,C) \in W$, and $i \in \{1, 2\}$.
		
		\item $I_1 = I_2 = \emptyset$.
	\end{enumerate}
\end{definition}
We now show that $\mathcal{M}_1$, $\mathcal{M}_2$ are $\Sigma$-models that can be extended to models verifying $\phi$ and falsifying $\psi$, respectively.
\begin{lemma}\label{L:models}
$\mathcal{M}_1$ and $\mathcal{M}_2$ are $\Sigma$-models.
\end{lemma}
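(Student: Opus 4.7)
The plan is to verify, for each of the two structures, the five clauses in the definition of a constant-domain Kripke $\Sigma$-model. Non-emptiness of $W$ is immediate since the fixed quasi-partitions $\mathbf{v}$ and $\mathbf{w}$ both belong to $W$. Since $\Sigma=\{P^1,Q^1\}$ contains no individual constants, $Const_\Sigma=\emptyset$, so $I_i=\emptyset$ is automatically the identity function on $Const_\Sigma$. The domain $D_i=\mathbb{N}$ is non-empty, and is disjoint from $W$ because elements of $W$ are ordered triples whose components include infinite subsets of $\mathbb{N}$, hence are not finite ordinals (i.e.\ natural numbers) under our conventions.

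For the accessibility relation in $\mathcal{M}_2$, reflexivity and transitivity of $\prec_2=\trianglelefteq$ are immediate from the definition of $\trianglelefteq$ and the transitivity of $\subseteq$. For $\mathcal{M}_1$, reflexivity of $\prec_1$ holds because the additional implication in its definition becomes trivial when $B=E$. Transitivity of $\prec_1$ is the one point requiring care: assuming $(A,B,C)\prec_1(D,E,F)\prec_1(G,H,I)$, we immediately get $(A,B,C)\trianglelefteq(G,H,I)$ from transitivity of $\trianglelefteq$; to verify the extra clause, suppose $\mathbf{v}\trianglelefteq(A,B,C)$ and $|B\cap\mathbf{v}_2|=\omega$. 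The first step yields $|E\cap\mathbf{v}_2|=\omega$. From $\mathbf{v}\trianglelefteq(A,B,C)\trianglelefteq(D,E,F)$, transitivity gives $\mathbf{v}\trianglelefteq(D,E,F)$, so applying the extra clause of the second step produces $|H\cap\mathbf{v}_2|=\omega$, as required.

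Finally, the clause on $V_i$ requires that $V_i(P,\cdot),V_i(Q,\cdot)$ take values in $\mathscr{P}(D_i)$ (which is immediate since $A,B,C\subseteq\mathbb{N}=D_i$) and be monotonic along the accessibility relation. Since $\prec_1\subseteq\prec_2=\trianglelefteq$, it suffices to establish monotonicity along $\trianglelefteq$. If $(A,B,C)\trianglelefteq(D,E,F)$, then $A\subseteq D$ directly by the definition of $\trianglelefteq$, handling $Q$; and $A\cup B\subseteq D\cup E$ by Corollary \ref{C:inclusion}, handling $P$.

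I do not anticipate any significant obstacle: the only non-routine point is showing that the extra condition in the definition of $\prec_1$ preserves transitivity, which is a short diagram chase using transitivity of $\trianglelefteq$ to propagate the hypothesis $\mathbf{v}\trianglelefteq(A,B,C)$ forward to $(D,E,F)$ before applying the second step.
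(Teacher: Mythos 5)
Your proof is correct and follows essentially the same route as the paper: the transitivity of $\prec_1$ is handled by propagating $\mathbf{v}\trianglelefteq(A,B,C)$ forward via transitivity of $\trianglelefteq$ before applying the second conditional, and monotonicity of $V_i$ is reduced to monotonicity along $\trianglelefteq$, using Corollary \ref{C:inclusion} for $P$. The extra bookkeeping you include (non-emptiness of $W$, disjointness of $D$ and $W$, the vacuous constant interpretation) is harmless and the paper simply leaves it implicit.
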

\begin{proof}
	The extensions of $Q$ are monotonic relative to $\trianglelefteq$ (hence also relative to $\prec_i$ for $i \in \{1,2\}$) by definition of $V_i$. The extensions of $P$ are monotonic relative to $\trianglelefteq$  (and thus also relative to $\prec_i$ for $i \in \{1,2\}$) by Corollary \ref{C:inclusion}.
	
	The relation $\prec_2 = \trianglelefteq$ is clearly a pre-order on $W$. We show that $\prec_1$ is a pre-order on the same set.
	
	\textit{Reflexivity}. Let $(A, B, C) \in W$. We have $(A,B,C)\mathrel{\prec_1}(A,B,C)$, since both $(A,B,C)\mathrel{\trianglelefteq}(A,B,C)$ and
$$
(\mathbf{v} \trianglelefteq (A, B, C)\, \&\,  \mid B \cap \mathbf{v}_2\mid = \omega) \Rightarrow \mid B \cap \mathbf{v}_2\mid = \omega
$$
	are trivially satisfied.
	
	\textit{Transitivity}. Let $(A, B, C), (D,E,F), (G,H,I) \in W$ be such that $(A,B,C)\mathrel{\prec_1}(D,E,F)$ and $(D,E,F)\mathrel{\prec_1}(G,H,I)$. Then we must have both $(A,B,C)\mathrel{\trianglelefteq}(D,E,F)$ and $(D,E,F)\mathrel{\trianglelefteq}(G,H,I)$, and thus  $(A,B,C)\mathrel{\trianglelefteq}(G,H,I)$.
	
	On the other hand, we must have both:
	\begin{equation}\label{E:tr1}
	(\mathbf{v}\mathrel{\trianglelefteq} (A, B, C)\, \&\,  \mid B \cap \mathbf{v}_2\mid = \omega) \Rightarrow \mid E \cap \mathbf{v}_2\mid = \omega
	\end{equation}
	and:
	\begin{equation}\label{E:tr2}
	(\mathbf{v} \mathrel{\trianglelefteq} (D,E,F)\, \&\,  \mid E \cap \mathbf{v}_2\mid = \omega) \Rightarrow \mid H \cap \mathbf{v}_2\mid = \omega
	\end{equation}
	But then we reason as follows:
	\begin{align}
	&\mathbf{v} \mathrel{\trianglelefteq} (A, B, C)\label{E:p1} &&\text{(premise)}\\
	&\mid B \cap \mathbf{v}_2\mid = \omega\label{E:p2} &&\text{(premise)}\\
	&\mid E \cap \mathbf{v}_2\mid = \omega\label{E:p3} &&\text{(by \eqref{E:tr1}, \eqref{E:p1} and \eqref{E:p2})}\\
	&\mathbf{v} \trianglelefteq (D,E,F)\label{E:p4} &&\text{(by \eqref{E:p1} and $(A,B,C)\mathrel{\trianglelefteq}(D,E,F)$)}\\
	&\mid H \cap \mathbf{v}_2\mid = \omega\label{E:p5} &&\text{(by \eqref{E:tr2}, \eqref{E:p3} and \eqref{E:p4})}
	\end{align}
	Thus we have shown that both $(A,B,C)\mathrel{\trianglelefteq}(G,H,I)$ and:
	$$
		(\mathbf{v} \trianglelefteq (A,B,C) \&  \mid B \cap \mathbf{v}_2\mid = \omega) \Rightarrow \mid H \cap \mathbf{v}_2\mid = \omega,
	$$
whence $(A,B,C)\mathrel{\prec_1}(G,H,I)$ also follows.	
\end{proof}
We pause to state another corollary we need towards our main lemma:
\begin{corollary}\label{C:M1}
For arbitrary $(A,B,C), (D,E,F) \in W$, the following statements hold:
\begin{enumerate}
	\item $\mathbf{v}\mathrel{\prec_1}(A,B,C)$ iff $\mathbf{v}\mathrel{\trianglelefteq}(A,B,C)$ and $\mid B \cap \mathbf{v}_2\mid = \omega$.
	
	\item If both $\mathbf{v}\mathrel{\prec_1}(A,B,C)$ and $\mathbf{v}\mathrel{\prec_1}(D,E,F)$, then 
	$$
	(A,B,C)\mathrel{\prec_1}(D,E,F) \Leftrightarrow (A,B,C)\mathrel{\trianglelefteq}(D,E,F).
	$$
	
	\item If both $(A,B,C)\mathrel{\trianglelefteq}(D,E,F)$ and $(D,E,F)\mathrel{\trianglelefteq}(A,B,C)$, then $(A,B,C) = (D,E,F)$.
\end{enumerate}	
\end{corollary}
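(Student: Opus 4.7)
The plan is to dispatch each of the three parts by a short direct calculation, unpacking Definition \ref{D:M}.2 and Definition \ref{D:quasi-partitions}.

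For part 1, I would note that since $\mathbf{v}_2 = 3\mathbb{N}+1$ is itself countably infinite, the condition $|\mathbf{v}_2 \cap \mathbf{v}_2| = \omega$ that appears as antecedent when we instantiate the defining clause of $\prec_1$ with $\mathbf{v}$ as the lower world is automatically true. Hence $\mathbf{v}\prec_1(A,B,C)$ unfolds precisely to the conjunction of $\mathbf{v}\trianglelefteq(A,B,C)$ and $|B\cap\mathbf{v}_2|=\omega$, giving the equivalence.

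For part 2, the left-to-right direction is immediate since $\prec_1 \subseteq \trianglelefteq$ by definition. For the converse, assume $(A,B,C)\trianglelefteq(D,E,F)$ together with $\mathbf{v}\prec_1(A,B,C)$ and $\mathbf{v}\prec_1(D,E,F)$. To conclude $(A,B,C)\prec_1(D,E,F)$ I must check the extra clause in the definition of $\prec_1$, namely that $\mathbf{v}\trianglelefteq(A,B,C)$ together with $|B\cap\mathbf{v}_2|=\omega$ forces $|E\cap\mathbf{v}_2|=\omega$. But by part 1 the assumption $\mathbf{v}\prec_1(D,E,F)$ already yields $|E\cap\mathbf{v}_2|=\omega$ outright, so the implication holds vacuously in its conclusion.

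For part 3, if $(A,B,C)\trianglelefteq(D,E,F)$ and $(D,E,F)\trianglelefteq(A,B,C)$, then reading off the definition of $\trianglelefteq$ I obtain $A\subseteq D$, $D\subseteq A$, $F\subseteq C$, and $C\subseteq F$, so $A=D$ and $C=F$. Since $(A,B,C)$ and $(D,E,F)$ are quasi-partitions, both $B$ and $E$ equal $\mathbb{N}\setminus(A\cup C)=\mathbb{N}\setminus(D\cup F)$, hence $B=E$ as well.

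None of the three steps is really an obstacle; the only mild subtlety is remembering, in part 2, that the implication in the definition of $\prec_1$ can be satisfied simply because its consequent is already known, rather than via a substantive derivation, which is what makes $\prec_1$ behave like $\trianglelefteq$ on the cone above $\mathbf{v}$.
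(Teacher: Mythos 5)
Your proposal is correct and follows essentially the same route as the paper: unfold the definition of $\prec_1$ with $\mathbf{v}$ as lower world for Part 1, observe in Part 2 that the extra conditional is trivially satisfied because Part 1 applied to $\mathbf{v}\mathrel{\prec_1}(D,E,F)$ already yields $\mid E \cap \mathbf{v}_2\mid = \omega$, and derive $B = E$ in Part 3 from $A = D$, $C = F$ and the quasi-partition conditions. No gaps.
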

\begin{proof}
	(Part 1). We have $\mathbf{v}\mathrel{\prec_1}(A,B,C)$ iff $\mathbf{v}\mathrel{\trianglelefteq}(A,B,C)$ and
	$$
	(\mathbf{v} \mathrel{\trianglelefteq} \mathbf{v} \,\&\,  \mid \mathbf{v}_2 \cap \mathbf{v}_2\mid = \omega) \Rightarrow \mid B \cap \mathbf{v}_2\mid = \omega,
	$$
	but, since the premise of the latter conditional is trivially true, it holds iff $\mid B \cap \mathbf{v}_2\mid = \omega$.
	
	(Part 2). By definition, $(A,B,C)\mathrel{\prec_1}(D,E,F)$ implies $(A,B,C)\mathrel{\trianglelefteq}(D,E,F)$. In the other direction, if $(A,B,C)\mathrel{\trianglelefteq}(D,E,F)$ and, additionally, $\mathbf{v}\mathrel{\prec_1}(D,E,F)$, then we must have, by Part 1, that $\mid E \cap \mathbf{v}_2\mid = \omega$. But then also the conditional 
	 $$
	 (\mathbf{v} \trianglelefteq \mathbf{v} \&  \mid \mathbf{v}_2 \cap B\mid = \omega) \Rightarrow \mid E \cap \mathbf{v}_2\mid = \omega
	 $$ 
	 must be trivially true, whence also $(A,B,C)\mathrel{\prec_1}(D,E,F)$ follows.
	 
	 (Part 3). If both $(A,B,C)\mathrel{\trianglelefteq}(D,E,F)$ and $(D,E,F)\mathrel{\trianglelefteq}(A,B,C)$ then we have both $A = D$ and $C = F$. Now $B = E$ follows by the fact that $(A,B,C), (D,E,F)$ are quasi-partitions.
\end{proof}

In case we have $(A,B,C)\mathrel{\trianglelefteq}(D,E,F)$ but $(A,B,C) \neq (D,E,F)$, we will write $(A,B,C)\mathrel{\triangleleft}(D,E,F)$. 
\begin{lemma}\label{L:extendable}
	Fix a surjective $f: \mathbb{N} \to \mathbf{v}_2$, and let $\sigma_1: W \to \mathscr{P}(\mathbb{N})$ and $\sigma_2: W \to \{0,1\}$ be defined as follows for all $(A,B,C) \in W$:
		$$
		\sigma_1(A,B,C):= \{n\mid f(n) \in A \}
		$$
		
			$$
		\sigma_2(A,B,C):= \begin{cases}
			1,\text{ if } \mathbf{w}_1\mathrel{\triangleleft}(A,B,C);\\
			0,\text{ otherwise. }
		\end{cases}
		$$
	Then $\sigma_1, \sigma_2$ are monotonic relative to $\trianglelefteq$, and hence also relative to $\prec_i$ for all $i \in \{1,2\}$.

	Furthermore, we define that $\mathcal{M}'_1 := (\mathcal{M}_1; R^1\mapsto \sigma_1)$ and $\mathcal{M}'_2 := (\mathcal{M}_2; S^0\mapsto \sigma_2)$.
	Then both $\mathcal{M}'_1,\mathbf{v}\models \phi$ and $\mathcal{M}'_2,\mathbf{w}\not\models \psi$.
\end{lemma}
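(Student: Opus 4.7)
The proof splits into three parts: monotonicity of $\sigma_1,\sigma_2$, validity of $\phi$ at $\mathbf{v}$ in $\mathcal{M}'_1$, and failure of $\psi$ at $\mathbf{w}$ in $\mathcal{M}'_2$. Monotonicity of $\sigma_1$ is immediate: if $(A,B,C)\trianglelefteq(D,E,F)$ then $A\subseteq D$, so $\sigma_1(A,B,C)\subseteq\sigma_1(D,E,F)$. For $\sigma_2$, suppose $\sigma_2(A,B,C)=1$, i.e.\ $\mathbf{w}\triangleleft(A,B,C)$, and $(A,B,C)\trianglelefteq(D,E,F)$. Transitivity yields $\mathbf{w}\trianglelefteq(D,E,F)$; strictness holds because $\mathbf{w}=(D,E,F)$ would give $(A,B,C)\trianglelefteq\mathbf{w}$, which combined with $\mathbf{w}\trianglelefteq(A,B,C)$ yields $\mathbf{w}=(A,B,C)$ by Corollary \ref{C:M1}(3), contradicting $\mathbf{w}\triangleleft(A,B,C)$.

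For $\mathcal{M}'_1,\mathbf{v}\models\phi$, the first conjunct $\forall x\exists y(P(y)\wedge(Q(y)\to R(x)))$ is handled by choosing, for each $a\in\mathbb{N}$, the witness $y:=f(a)\in\mathbf{v}_2$. Then $P(f(a))$ holds at $\mathbf{v}$ (since $f(a)\in\mathbf{v}_2\subseteq V_1(P,\mathbf{v})$), and $Q(f(a))\to R(a)$ holds at $\mathbf{v}$ because at every $\mathbf{v}'=(A',B',C')\succ_1\mathbf{v}$ the clause $Q(f(a))$, namely $f(a)\in A'$, is literally the defining condition of $R(a)$, namely $a\in\sigma_1(\mathbf{v}')=\{n\mid f(n)\in A'\}$. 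For $\neg\forall xR(x)$ (via Lemma \ref{L:neg-and-iff}), any $\mathbf{v}'=(A',B',C')\succ_1\mathbf{v}$ satisfies $|B'\cap\mathbf{v}_2|=\omega$ by Corollary \ref{C:M1}(1); picking $c\in B'\cap\mathbf{v}_2$ and $a\in f^{-1}(c)$ yields $f(a)=c\notin A'$ (by disjointness of $A',B'$), so $R(a)$ fails at $\mathbf{v}'$.

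For $\mathcal{M}'_2,\mathbf{w}\not\models\psi$, I take $\mathbf{w}$ itself as the witness of failure of the outer implication. Clearly $S$ fails at $\mathbf{w}$ since $\mathbf{w}\not\triangleleft\mathbf{w}$ makes $\sigma_2(\mathbf{w})=0$. For the antecedent $\forall x(P(x)\to(Q(x)\vee S))$ at $\mathbf{w}$, fix $a\in\mathbb{N}$ and $\mathbf{w}'=(A',B',C')\succ_2\mathbf{w}$ with $P(a)$ at $\mathbf{w}'$. If $\mathbf{w}'=\mathbf{w}$, then $a\in\mathbf{w}_1\cup\mathbf{w}_2=\mathbf{w}_1=V_2(Q,\mathbf{w})$ (using $\mathbf{w}_2=\emptyset$), so $Q(a)$ holds. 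Otherwise $\mathbf{w}\triangleleft\mathbf{w}'$, so $\sigma_2(\mathbf{w}')=1$ and $S$ holds at $\mathbf{w}'$.

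The only real subtlety is the choice $y=f(a)$ in the first conjunct: it forces $Q(y)$ to coincide, at every $\prec_1$-successor of $\mathbf{v}$, with the very condition defining $R(x)$, so the intuitionistic implication $Q(y)\to R(x)$ survives monotonically. The survival of $\neg\forall xR(x)$ at every $\prec_1$-successor of $\mathbf{v}$ is exactly what the refinement of $\trianglelefteq$ to $\prec_1$ was designed to guarantee, via Corollary \ref{C:M1}(1); everything else is direct bookkeeping from the definitions.
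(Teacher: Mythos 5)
Your proposal is correct and follows essentially the same route as the paper's own proof: the same witnesses $y=f(a)$ for the first conjunct of $\phi$, the same use of Corollary \ref{C:M1}.1 together with surjectivity of $f$ and Lemma \ref{L:neg-and-iff} for $\neg\forall xR(x)$, and the same case split ($\mathbf{w}=(A,B,C)$ versus $\mathbf{w}\triangleleft(A,B,C)$) with $\mathbf{w}$ itself as the refuting world for $\psi$. Your reading of the $\sigma_2$-clause as $\mathbf{w}\triangleleft(A,B,C)$ matches the intended definition, and your explicit antisymmetry argument (via Corollary \ref{C:M1}.3) for the strictness step in the monotonicity of $\sigma_2$ fills in a detail the paper leaves implicit.
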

\begin{proof}
We deal with the monotonicity claims first. As for $\sigma_1$, if $(A,B,C)\mathrel{\trianglelefteq}(D,E,F)$, and $n \in \sigma_1(A,B,C)$, then $f(n) \in A \subseteq D$, hence also $n \in \sigma_1(D,E,F)$. As for $\sigma_2$, if $(A,B,C)\mathrel{\trianglelefteq}(D,E,F)$ and  $\sigma_2(A,B,C) = 1$, then $\mathbf{w}_1\mathrel{\triangleleft}(A,B,C)$, hence also $\mathbf{w}_1\mathrel{\triangleleft}(D,E,F)$ and $\sigma_2(D,E,F) = 1$.

It remains to show the satisfaction claims for the extended models:

($\mathcal{M}'_1,\mathbf{v}\models \phi$). Assume that $(A,B,C) \in W$ is such that $\mathbf{v}\mathrel{\prec_1}(A,B,C)$. Then, by Corollary \ref{C:M1}.1, we have $\mathbf{v}\mathrel{\trianglelefteq}(A,B,C)$ and $\mid B \cap \mathbf{v}_2\mid = \omega$. So we choose any $n \in  B \cap \mathbf{v}_2$ and any $m \in \mathbb{N}$ such that $f(m) = n$. Then $f(m) \notin A$, hence $m \notin  \sigma_1(A,B,C)$, in other words, $(\mathcal{M}'_1; \mathbb{N}),(A, B, C)\not\models R(m)$, whence  $(\mathcal{M}'_1; \mathbb{N}),(A, B, C)\not\models \forall xR(x)$ and, by Expansion Property, $\mathcal{M}'_1,(A, B, C)\not\models \forall xR(x)$. Since the $\prec_1$-successor to $\mathbf{v}$ was chosen arbitrarily, it follows, by Lemma \ref{L:neg-and-iff}, that $\mathcal{M}'_1,\mathbf{v}\models \neg\forall xR(x)$.

Next, if $n \in \mathbb{N}$, then consider $f(n) \in \mathbf{v}_2$. Clearly, we have  $(\mathcal{M}'_1; \mathbb{N}),\mathbf{v}\models P(f(n))$. If now $(A,B,C) \in W$ is such that $\mathbf{v}\mathrel{\prec_1}(A,B,C)$ and $(\mathcal{M}'_1; \mathbb{N}),(A, B, C)\models Q(f(n))$, then $f(n) \in A$, thus also $n \in \sigma_1(A,B,C)$ and, therefore, $(\mathcal{M}'_1; \mathbb{N}),(A, B, C)\models R(n)$. We have shown that $(\mathcal{M}'_1; \mathbb{N}),\mathbf{v}\models Q(f(n)) \to R(n)$. Summing up, we get that $(\mathcal{M}'_1; \mathbb{N}),\mathbf{v}\models  P(f(n)) \wedge (Q(f(n)) \to R(n))$ for arbitrary $n \in \mathbb{N}$, therefore also $(\mathcal{M}'_1; \mathbb{N}),\mathbf{v}\models  \forall x\exists y(P(y) \wedge (Q(y) \to R(x)))$ and 
$\mathcal{M}'_1,\mathbf{v}\models  \forall x\exists y(P(y) \wedge (Q(y) \to R(x)))$ by Expansion Property.

($\mathcal{M}'_2,\mathbf{w}\not\models \psi$). Assume that $n \in \mathbb{N}$ and that $(A,B,C) \in W$ is such that $\mathbf{w}\mathrel{\trianglelefteq}(A,B,C)$ and $(\mathcal{M}'_2; \mathbb{N}),(A, B, C)\models P(n)$. Then two cases are possible:

\textit{Case 1}. $\mathbf{w} = (A,B,C)$. Then $n \in \mathbf{w}_1 \cup \mathbf{w}_2 = \mathbf{w}_1$ so that $(\mathcal{M}'_2; \mathbb{N}),(A, B, C)\models Q(n)$, whence also $(\mathcal{M}'_2; \mathbb{N}),(A, B, C)\models Q(n) \vee S$.

\textit{Case 2}. $\mathbf{w}\mathrel{\triangleleft}(A,B,C)$. Then $(\mathcal{M}'_2; \mathbb{N}),(A, B, C)\models S$, therefore also $(\mathcal{M}'_2; \mathbb{N}),(A, B, C)\models Q(n) \vee S$.

Summing up, we have shown that $(\mathcal{M}'_2; \mathbb{N}),\mathbf{w}\models P(n) \to (Q(n) \vee S)$ for arbitrary $n \in \mathbb{N}$, whence it follows that $(\mathcal{M}'_2; \mathbb{N}),\mathbf{w}\models \forall x(P(x) \to (Q(x) \vee S))$ and, after applying Expansion Property, that $\mathcal{M}'_2,\mathbf{w}\models \forall x(P(x) \to (Q(x) \vee S))$. However, we also have $\mathcal{M}'_2,\mathbf{w}\not\models S$ by definition of $\sigma_2$, therefore $\psi$ fails at $(\mathcal{M}'_2,\mathbf{w})$.	
\end{proof}
Next, we define a particular bi-asimulation $\mathbb{A}$ between $\mathcal{M}_1$ and $\mathcal{M}_2$ such that we have $\mathbf{v}\mathrel{\mathbb{A}}\mathbf{w}$. The definition looks as follows:
\begin{definition}\label{D:A-bi-asimulation}
	Relative to the models ${\cal M}_1$ and ${\cal M}_2$ given in
	Definition \ref{D:M}, the relation $\mathbb{A}$ is defined as follows:
	\begin{enumerate}
		\item $\mathbb{A} \subseteq \bigcup_{k \geq 0} [ (W \times \mathbb{N}^{k})
		\times (W \times \mathbb{N}^{k})]$; \item Given any $i,j$, such that $\{i,j\} = \{1,2\}$, any $(A,B,C), (D,E,F)\in W$, any $n \geq 0$ and $\bar{a}_n, \bar{b}_n \in \mathbb{N}^{n}$ we have $(A,B,C)^\frown(\bar{a}_n)\mathrel{\mathbb{A}}(D,E,F)^\frown(\bar{b}_n)$, iff the following conditions hold:
		\begin{enumerate}
			\item Binary relation $[\bar{a}_n \mapsto \bar{b}_n]$ defines a bijection.
			
			\item If $1 \leq k \leq n$ and  $a_k \in A$ then $b_k \in D$;
			
			\item If $1 \leq k \leq n$ and $a_k \in B$, then $b_k \in D \cup
			E$.
		\end{enumerate}
	\end{enumerate}	
\end{definition}
Again, we start by pointing out some easy, but important consequences of our definition:
\begin{corollary}\label{C:contraposed-bi-asimulation}
	For any $i,j$, such that $\{i,j\} = \{1,2\}$, any $(A,B,C), (D,E,F)\in W$, any $n \geq 0$, and any $\bar{a}_n, \bar{b}_n \in \mathbb{N}^{n}$, if we have $(A,B,C)^\frown(\bar{a}_n)\mathrel{\mathbb{A}}(D,E,F)^\frown(\bar{b}_n)$, then, for every $1 \leq k \leq n$, we have:
	 $$
	 b_k \in F \Rightarrow a_k \in C.
	 $$ 
\end{corollary}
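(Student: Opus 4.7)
The plan is to argue by contraposition, translating $b_k \in F \Rightarrow a_k \in C$ into the equivalent claim $a_k \notin C \Rightarrow b_k \notin F$, and then to exploit the fact that both $(A,B,C)$ and $(D,E,F)$ are quasi-partitions of $\mathbb{N}$ (so the three components of each are pairwise disjoint and jointly exhaust $\mathbb{N}$).

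Fix $1 \leq k \leq n$ and suppose $a_k \notin C$. Since $a_k \in \mathbb{N} = A \cup B \cup C$, this forces $a_k \in A \cup B$. I would then split into two cases. If $a_k \in A$, then clause (b) of Definition \ref{D:A-bi-asimulation} yields $b_k \in D$; if $a_k \in B$, then clause (c) gives $b_k \in D \cup E$. In either case $b_k \in D \cup E$, and since $D, E, F$ are pairwise disjoint by the quasi-partition condition (Definition \ref{D:quasi-partitions}.1(b)), it follows that $b_k \notin F$, as required.

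There is no genuine obstacle here: the corollary is just a contrapositive reformulation of the conjunction of clauses (b) and (c) of Definition \ref{D:A-bi-asimulation} in the presence of the exhaustiveness and pairwise disjointness of a quasi-partition. The only point worth noting is that the argument does not depend on which of $i,j$ equals $1$ or $2$, because the conditions defining $\mathbb{A}$ in Definition \ref{D:A-bi-asimulation} are symmetric in the two models; thus a single case analysis handles both directions uniformly.
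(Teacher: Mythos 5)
Your proof is correct and is essentially the paper's own argument read in the contrapositive direction: both rest on exactly the same ingredients, namely clauses 2(b) and 2(c) of Definition \ref{D:A-bi-asimulation}, the pairwise disjointness of $D,E,F$, and the exhaustiveness $A \cup B \cup C = \mathbb{N}$. No further comment is needed.
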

\begin{proof}
	Indeed, in the assumptions of the corollary we get the following chain of valid implications:
	\begin{align*}
		 b_k \in F &\Rightarrow b_k \notin D \cup E &&\text{(by $F \cap (D \cup E) = \emptyset$)}\\
		 &\Rightarrow a_k \notin A \cup B &&\text{(by $(A,B,C)^\frown(\bar{a}_n)\mathrel{\mathbb{A}}(D,E,F)^\frown(\bar{b}_n)$)}\\
		 &\Rightarrow a_k \in C&&\text{(by $\{\bar{a}_n\}\subseteq \mathbb{N}$ and $A \cup B \cup C = \mathbb{N}$)}
	\end{align*}
\end{proof}

\begin{corollary}\label{C:A-bi-asimulation}
	For any $i,j$, such that $\{i,j\} = \{1,2\}$, any $(A,B,C), (D,E,F)\in W$, any $n \geq 0$, and any $\bar{a}_n, \bar{b}_n \in \mathbb{N}^{n}$, we have both $(A,B,C)^\frown(\bar{a}_n)\mathrel{\mathbb{A}}(D,E,F)^\frown(\bar{b}_n)$ and $(D,E,F)^\frown(\bar{b}_n)\mathrel{\mathbb{A}}(A,B,C)^\frown(\bar{a}_n)$ iff the following conditions hold:
	\begin{enumerate}
		\item Binary relation $[\bar{a}_n \mapsto \bar{b}_n]$ defines a bijection.
		
		\item For all $1 \leq k \leq n$ it is true that $a_k \in A$ iff $b_k \in D$;
		
		\item For all $1 \leq k \leq n$ it is true that $a_k \in C$ iff $b_k \in
		F$.
	\end{enumerate}
\end{corollary}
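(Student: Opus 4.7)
The plan is to treat the corollary as a straightforward unpacking of Definition~\ref{D:A-bi-asimulation} together with Corollary~\ref{C:contraposed-bi-asimulation}, applied symmetrically in both directions of the bi-asimulation. I would split the proof into the two directions of the biconditional.

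For the left-to-right direction, I would assume that both $(A,B,C)^\frown(\bar{a}_n)\mathrel{\mathbb{A}}(D,E,F)^\frown(\bar{b}_n)$ and $(D,E,F)^\frown(\bar{b}_n)\mathrel{\mathbb{A}}(A,B,C)^\frown(\bar{a}_n)$. Condition~1 is then immediate from clause (a) of Definition~\ref{D:A-bi-asimulation}.2 applied to either direction (and the fact that a bijection composed with itself is the inverse relation, so clause (a) applied symmetrically yields that $[\bar{a}_n\mapsto\bar{b}_n]$ is a bijection). For condition~2, the forward implication $a_k\in A \Rightarrow b_k \in D$ is exactly clause (b) applied to the first bi-asimulation, while $b_k\in D \Rightarrow a_k\in A$ is clause (b) applied to the second. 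For condition~3, the implication $a_k\in C \Rightarrow b_k\in F$ is obtained by applying Corollary~\ref{C:contraposed-bi-asimulation} to the reversed bi-asimulation, and the converse $b_k\in F \Rightarrow a_k\in C$ by applying the same corollary to the original one.

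For the right-to-left direction, I would assume conditions 1--3 and verify clauses (a)--(c) of Definition~\ref{D:A-bi-asimulation}.2 for both $(A,B,C)^\frown(\bar{a}_n)\mathrel{\mathbb{A}}(D,E,F)^\frown(\bar{b}_n)$ and its symmetric counterpart. Clause (a) is precisely condition~1 (noting that a bijection viewed as a relation is symmetric in its role, so $[\bar{b}_n\mapsto\bar{a}_n]$ is also a bijection). Clause (b) in each direction is a weakening of the biconditional in condition~2. The only mildly non-trivial point is clause (c): to show, say, that $a_k\in B$ implies $b_k\in D\cup E$, I would use disjointness of the quasi-partition $(A,B,C)$ to conclude $a_k\notin A$ and $a_k\notin C$; then the biconditionals in conditions~2 and~3 give $b_k\notin D$ and $b_k\notin F$ respectively; finally, since $b_k\in\mathbb{N} = D\cup E\cup F$ by condition~(a) of Definition~\ref{D:quasi-partitions}, we get $b_k\in E \subseteq D\cup E$. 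The symmetric implication $b_k\in E \Rightarrow a_k\in A\cup B$ is proved identically.

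I do not anticipate any real obstacle here: every step is either an invocation of the definition, of the previous corollary, or of the elementary set-theoretic properties of quasi-partitions (pairwise disjointness and exhaustion of $\mathbb{N}$). The main care required is simply keeping track of which direction of $\mathbb{A}$ supplies which implication, and using the quasi-partition structure to convert the one-sided conditions of Definition~\ref{D:A-bi-asimulation} into the two-sided biconditionals stated in the corollary.
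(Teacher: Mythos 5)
Your proof is correct and follows essentially the same route as the paper: the forward direction by applying clause 2(b) of Definition~\ref{D:A-bi-asimulation} and Corollary~\ref{C:contraposed-bi-asimulation} to both orientations of $\mathbb{A}$, and the converse by noting the inverse map is a bijection and using disjointness plus exhaustiveness of the quasi-partitions to recover clause 2(c). The only blemish is the garbled parenthetical about ``a bijection composed with itself'' in the left-to-right direction, which is unnecessary anyway since clause 2(a) for $(A,B,C)^\frown(\bar{a}_n)\mathrel{\mathbb{A}}(D,E,F)^\frown(\bar{b}_n)$ literally is condition~1.
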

\begin{proof}
	The $(\Rightarrow)$-part follows from Corollary \ref{C:contraposed-bi-asimulation} and condition 2(b) of Definition \ref{D:A-bi-asimulation}.
	
	For the $(\Leftarrow)$-part, we observe that if $[\bar{a}_n \mapsto \bar{b}_n]$ defines a bijection, then $[\bar{b}_n \mapsto \bar{a}_n]$ also defines a bijection. The satisfaction of condition 2(b)  of Definition \ref{D:A-bi-asimulation} in both directions is then an immediate consequence of condition 2 of the Corollary, and as for Condition 2(c), we get that, if $1 \leq k \leq n$ and $a_k \in B$, then $a_k \notin C$, whence $b_k \notin E$ by condition 3 of the Corollary. Therefore, $b_k \in D\cup E$. Similarly, if $b_k \in E$, then $b_k \notin F$, whence $a_k \notin C$ by condition 3 of the Corollary. But then $a_k \in A\cup B$ 
\end{proof}
We can now make the crucial step towards our main result:
\begin{lemma}\label{L:A-bi-asimulation}
	The relation $\mathbb{A}$, given in Definition \ref{D:A-bi-asimulation}, is a bi-asimulation between $\mathcal{M}_1$ and $\mathcal{M}_2$, as given in Definition \ref{D:M}, and we have $\mathbf{v}\mathrel{\mathbb{A}}\mathbf{w}$.
\end{lemma}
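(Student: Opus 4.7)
The claim $\mathbf{v}\mathrel{\mathbb{A}}\mathbf{w}$ is immediate from Definition \ref{D:A-bi-asimulation} with $n=0$, since the empty function is trivially a bijection and conditions 2(b) and 2(c) are vacuous. To show $\mathbb{A}$ is a bi-asimulation I verify each of the six conditions of Definition \ref{D:bi-asimulation} in turn. The (type), (atom), (left) and (right) conditions are routine: (type) holds by Definition \ref{D:A-bi-asimulation}.1; for (atom), since $V_i(P,(A,B,C))=A\cup B$ and $V_i(Q,(A,B,C))=A$, clauses 2(b) and 2(c) of Definition \ref{D:A-bi-asimulation} give precisely the required transport of atomic truth; for (left) and (right), given a fresh element on one side I pair it either with the image of an existing matched element or, if it is truly new, with an appropriately typed fresh element on the other side, which exists because $|A|=|C|=|D|=|F|=\omega$.

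The substantive work lies in (back) and (forth). I describe (back) in the case $i=1,j=2$ in detail; the case $i=2,j=1$ is formally simpler because $\prec_2=\trianglelefteq$ carries no side condition, and the (forth) cases are dual. Given $(A,B,C)^\frown(\bar{a}_n)\mathrel{\mathbb{A}}(D,E,F)^\frown(\bar{b}_n)$ and $(D,E,F)\trianglelefteq(G,H,I)$, I set
$$A':=A\cup\{a_k\mid b_k\in G\},\qquad C':=(C\setminus\{a_k\mid b_k\in G\cup H\})\setminus Y,$$
with $B':=\mathbb{N}\setminus(A'\cup C')$, where $Y\subseteq C\setminus\{\bar{a}_n\}$ is an infinite, coinfinite subset chosen only when $B=\emptyset$ and some $b_k\in H$ (otherwise $Y:=\emptyset$); this choice is possible because $|C|=\omega$. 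I then verify: (i) $(A',B',C')$ is a quasi-partition, the delicate point being $|B'|\in\{0,\omega\}$, which is forced by the choice of $Y$; (ii) $(A,B,C)\trianglelefteq(A',B',C')$ since $A\subseteq A'$ and $C'\subseteq C$; (iii) on tuples, $a_k\in A'\iff b_k\in G$ by construction, and Corollary \ref{C:contraposed-bi-asimulation} (via $I\subseteq F$) ensures $a_k\in C'\iff b_k\in I$, whence by exclusion $a_k\in B'\iff b_k\in H$, so Corollary \ref{C:A-bi-asimulation} delivers the symmetric relation of $(A',B',C')^\frown(\bar{a}_n)$ and $(G,H,I)^\frown(\bar{b}_n)$ in both directions; (iv) the extra clause of $\prec_1$: since $B\setminus\{\bar{a}_n\}\subseteq B'$, whenever $\mathbf{v}\trianglelefteq(A,B,C)$ and $|B\cap\mathbf{v}_2|=\omega$ we also have $|B'\cap\mathbf{v}_2|=\omega$.

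The (forth) constructions mirror (back), setting $D':=(D\setminus\{\bar{b}_n\}\setminus Z)\cup\{b_k\mid a_k\in A'\}$ and $F':=(F\setminus\{\bar{b}_n\})\cup\{b_k\mid a_k\in C'\}$ for a suitable $Z\subseteq D\setminus\{\bar{b}_n\}$. The hard case is (forth) with $j=1$, where the $\prec_1$ clause demands that $|E\cap\mathbf{v}_2|<\omega$ propagate to $|E'\cap\mathbf{v}_2|<\omega$ whenever $\mathbf{v}\trianglelefteq(D',E',F')$. The unlocking observation is that if $\mathbf{v}\not\trianglelefteq(D,E,F)$ then $\mathbf{v}\not\trianglelefteq(D',E',F')$ automatically (because $D'\subseteq D$ and $F\subseteq F'$), whereas if $\mathbf{v}\trianglelefteq(D,E,F)$ then $D\supseteq\mathbf{v}_1=3\mathbb{N}$, so $(D\setminus\mathbf{v}_2)\setminus\{\bar{b}_n\}$ is infinite and $Z$ can be chosen inside it, leaving at most finitely many elements of $E\setminus\{\bar{b}_n\}$ and of $\{b_k\mid a_k\in B',\,b_k\in\mathbf{v}_2\}$ in $E'\cap\mathbf{v}_2$. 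The overall difficulty is the simultaneous juggling of quasi-partition cardinalities, tuple-level atomic agreement, $\trianglelefteq$-monotonicity and the $\prec_1$-specific side condition through the careful choice of the auxiliary infinite sets $Y$ and $Z$.
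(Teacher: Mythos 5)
Your proposal is correct and follows essentially the same route as the paper: direct verification of the six bi-asimulation conditions, with the real work in (back)/(forth) done by transferring the tuple entries along the bijection, padding with an auxiliary infinite subset of $C$ (your $Y$) resp.\ $D$ (your $Z$) when the middle component is empty, and discharging the $\prec_1$ side conditions via $B\setminus\{\bar{a}_n\}\subseteq B'$ in (back) and by keeping the demoted part of $D$ away from $\mathbf{v}_2$ in (forth). Your way of securing the latter --- observing that $\mathbf{v}\trianglelefteq(D',E',F')$ forces $\mathbf{v}\trianglelefteq(D,E,F)$, hence $\mathbf{v}_1\subseteq D$, so $Z$ can be chosen inside $D\setminus\mathbf{v}_2$ --- is the same device as the paper's conditional choice of the split $D_1,D_2$ of $D\setminus\{\bar{b}_n\}$ in Case 2 of (forth), just packaged as a direct choice rather than a proof by contradiction.
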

\begin{proof}
	We check the satisfaction of the conditions given in Definition \ref{D:bi-asimulation} by $\mathbb{A}$. Condition \eqref{E:type} holds by Definition \ref{D:A-bi-asimulation}.1. Condition \eqref{E:atom} for $P$ and $Q$ follows from Definition \ref{D:A-bi-asimulation}.2(b) and \ref{D:A-bi-asimulation}.2(c). It is also clear that for any $i,j$ such that $\{i, j\} = \{1,2\}$, and any $w, v \in W$ we have $w\mathrel{\mathbb{A}}v$, so that $\mathbb{A}$ is non-empty, and, in particular, $\mathbf{v}\mathrel{\mathbb{A}}\mathbf{w}$ holds. We check the remaining conditions in more detail:
	
	\textit{Condition \eqref{E:back}}. Let $i,j$ be such that $\{i, j\} = \{1,2\}$, let $(A, B, C),(D,E,F)\in W$, and let, for some $n \geq 0$, $\bar{a}_n, \bar{b}_n \in \mathbb{N}^{n}$. Assume, moreover, that we have $(A,B,C)^\frown(\bar{a}_n)\mathrel{\mathbb{A}}(D,E,F)^\frown(\bar{b}_n)$, and that some $(G,H,I) \in W$ is such that $(D,E,F)\mathrel{\prec_j}(G,H,I)$ (hence, in particular, $(D,E,F)\trianglelefteq (G,H,I)$). We have to consider two cases, since $B$ can be either empty or infinite.
	
	\textit{Case 1}. $\mid B\mid = \omega$. Then consider the triple $(J,K,L)$ such that:
	\begin{eqnarray*}
		J & = & (A \setminus \{\bar{a}_n\}) \cup [\bar{b}_n \mapsto \bar{a}_n](G);\\
		K & = & (B \setminus \{\bar{a}_n\}) \cup [\bar{b}_n \mapsto \bar{a}_n](H);\\
		L & = & (C \setminus \{\bar{a}_n\}) \cup [\bar{b}_n \mapsto \bar{a}_n](I).
	\end{eqnarray*}
We will show that $(J,K,L)\in W$ and that we have $(A,B,C)\mathrel{\prec_i}(J,K,L)$, $(J,K,L)^\frown(\bar{a}_n)\mathrel{\mathbb{A}}(G,H,I)^\frown(\bar{b}_n)$, and
$(G,H,I)^\frown(\bar{b}_n)\mathrel{\mathbb{A}}(J,K,L)^\frown(\bar{a}_n)$. We break down the demonstration of this statement into the following series of claims:

\textit{Claim 1}. $J \cup K \cup L = \mathbb{N}$.

Indeed, we know that
\begin{align*}
[\bar{b}_n \mapsto \bar{a}_n](G) \cup [\bar{b}_n \mapsto \bar{a}_n](H) \cup [\bar{b}_n \mapsto \bar{a}_n](I) &= [\bar{b}_n \mapsto \bar{a}_n](G \cup H \cup I)\\
&=[\bar{b}_n \mapsto \bar{a}_n](\mathbb{N}) = \{\bar{a}_n\}
\end{align*}
since $(G,H,I)\in W$ and, therefore, $G \cup H \cup I = \mathbb{N}$.
On the other hand, we know that:
\begin{align*}
(A \setminus \{\bar{a}_n\})\cup (B \setminus \{\bar{a}_n\})\cup (C \setminus \{\bar{a}_n\}) &= (A \cup B \cup C)\setminus \{\bar{a}_n\}\\
&= \mathbb{N}\setminus \{\bar{a}_n\}
\end{align*}
since $(A,B,C)\in W$ and, therefore, $A \cup B \cup C = \mathbb{N}$.
Adding the two equalities together, we get that:
\begin{align*}
J \cup K \cup L &= (\mathbb{N}\setminus \{\bar{a}_n\})\cup \{\bar{a}_n\} = \mathbb{N},
\end{align*}
as desired.

\textit{Claim 2}. $J$, $K$, and $L$ are infinite.

Indeed, $A$, $B$, and $C$ are infinite, $\{\bar{a}_n\}$ is finite, and the following inclusions hold:
\begin{align*}
&A \setminus \{\bar{a}_n\} \subseteq J,\,B \setminus \{\bar{a}_n\} \subseteq K,\,C \setminus \{\bar{a}_n\} \subseteq L.
\end{align*}

\textit{Claim 3}. $J$, $K$, and $L$ are pairwise disjoint.

Indeed, take $J$ and $K$, for example. By definition, we have that $J \cap K$ is equal to  
$$
((A \setminus \{\bar{a}_n\}) \cup [\bar{b}_n \mapsto \bar{a}_n](G)) \cap ((B \setminus \{\bar{a}_n\}) \cup [\bar{b}_n \mapsto \bar{a}_n](H)),
$$
By application of distributivity laws, we further get that
\begin{align*}
J \cap K &= ((A \setminus \{\bar{a}_n\}) \cap (B \setminus \{\bar{a}_n\}))\cup\\
&\qquad\qquad\cup ((A \setminus \{\bar{a}_n\}) \cap [\bar{b}_n \mapsto \bar{a}_n](H))\\
&\qquad\qquad\cup ((B \setminus \{\bar{a}_n\}) \cap [\bar{b}_n \mapsto \bar{a}_n](G))\\
&\qquad\qquad\cup ([\bar{b}_n \mapsto \bar{a}_n](G) \cap [\bar{b}_n \mapsto \bar{a}_n](H))
\end{align*}  
Next, we know that $(A,B,C)\in W$, therefore, $A \cap B = \emptyset$, which implies that $(A \setminus \{\bar{a}_n\})\cap (B \setminus \{\bar{a}_n\}) = \emptyset$.

Moreover, we have 
$$
(A \setminus \{\bar{a}_n\}) \cap [\bar{b}_n \mapsto \bar{a}_n](H) \subseteq (A \setminus \{\bar{a}_n\}) \cap \{\bar{a}_n\} = \emptyset.
$$
By a parallel argument, we can see that also $(B \setminus \{\bar{a}_n\}) \cap [\bar{b}_n \mapsto \bar{a}_n](G) = \emptyset$.

Finally, we observe that $\bar{b}_n \mapsto \bar{a}_n$ is a bijection, and $(G,H,I)\in W$, therefore, $G \cap H = \emptyset$, whence it follows that $[\bar{b}_n \mapsto \bar{a}_n](G) \cap [\bar{b}_n \mapsto \bar{a}_n](H) = \emptyset$.

Summing up, we see that all the four sets in the union defining $J \cap K$ are empty and that we have that $J \cap K = \emptyset$. The other cases are similar.

\textit{Claim 4}. $(J,K,L)\in W$.

By Claims 1--3.

\textit{Claim 5}. $(A,B,C)\mathrel{\trianglelefteq}(J,K,L)$.

Indeed, if $a \in A$, and $a \notin \{\bar{a}_n\}$, then $a \in J$ by definition of $J$. Otherwise we have both $a \in A$ and
$a = a_k$ for some $1 \leq k \leq n$, but then $b_k \in D$ by $(A,B,C)^\frown(\bar{a}_n)\mathrel{\mathbb{A}}(D,E,F)^\frown(\bar{b}_n)$. Next, note that $(D,E,F)\trianglelefteq (G,H,I)$ implies that $D \subseteq G$, which means that $b_k \in G$. But the latter means that $a = a_k \in [\bar{b}_n \mapsto \bar{a}_n](G) \subseteq J$. Since $a \in A$ was chosen arbitrarily, we have shown that $A \subseteq J$.

Next, if $a \in L$, then either $a \in C \setminus \{\bar{a}_n\} \subseteq C$, or $a \in [\bar{b}_n \mapsto \bar{a}_n](I)$. In the latter case, $a = a_k$ for some $1 \leq k \leq n$, and also $b_k \in I$. Since $(D,E,F)\trianglelefteq (G,H,I)$ implies that $I \subseteq
F$, we get that $b_k \in F$, but the latter means, by $(A,B,C)^\frown(\bar{a}_n)\mathrel{\mathbb{A}}(D,E,F)^\frown(\bar{b}_n)$ and Corollary \ref{C:contraposed-bi-asimulation}, that $a_k \in C$. Since $a \in L$ was chosen arbitrarily, we have shown that $L \subseteq C$.

\textit{Claim 6}. $(\mathbf{v}
\trianglelefteq (A,B,C)\,\&\,\mid B \cap \mathbf{v}_2\mid = \omega) \Rightarrow \mid K \cap \mathbf{v}_2\mid = \omega$.

We observe that if  $B \cap \mathbf{v}_2$ is infinite, then so is $(B
\setminus \{\bar{a}_n\}) \cap \mathbf{v}_2$, given that $\{\bar{a}_n\}$
is finite. But since, according to our definition of $K$ we have
$(B \setminus \{\bar{a}_n\}) \subseteq K$, $K \cap \mathbf{v}_2$
must be infinite, too.

\textit{Claim 7}. $(A,B,C)\mathrel{\prec_i}(J,K,L)$.

By Claims 5 and 6.

\textit{Claim 8}. $(J,K,L)^\frown(\bar{a}_n)\mathrel{\mathbb{A}}(G,H,I)^\frown(\bar{b}_n)$, and
$(G,H,I)^\frown(\bar{b}_n)\mathrel{\mathbb{A}}(J,K,L)^\frown(\bar{a}_n)$.

Indeed, for any given $1 \leq k \leq
n$, we have $a_k \in J$ iff $a_k \in [\bar{b}_n \mapsto \bar{a}_n](G)$ iff $b_k \in G$, since $\bar{b}_n \mapsto \bar{a}_n$ defines a bijection. Similarly, we have $a_k \in K$ iff  $a_k \in [\bar{b}_n \mapsto \bar{a}_n](I)$ iff $b_k \in I$. But then our Claim follows from Corollary \ref{C:A-bi-asimulation}.

The correctness of our construction for Case 1 of condition \eqref{E:back} now follows from Claims 4, 7, and 8.

\textit{Case 2}. $B = \emptyset$. In this case, we partition $C \setminus \{\bar{a}_n\}$ into two
disjoint infinite sets $C_1$ and $C_2$, and define $(J,K,L)$
as follows:

\begin{eqnarray*}
	J & = & (A \setminus \{\bar{a}_n\}) \cup [\bar{b}_n \mapsto \bar{a}_n](G) ;\\
	K & = & C_1 \cup [\bar{b}_n \mapsto \bar{a}_n](H);\\
	L & = & C_2 \cup [\bar{b}_n \mapsto \bar{a}_n](I).
\end{eqnarray*}

We will demonstrate that all the claims that we have made in the previous case still hold.

\textit{Claim 1}. $J \cup K \cup L = \mathbb{N}$.

Again, we know that
\begin{align*}
	[\bar{b}_n \mapsto \bar{a}_n](G) \cup [\bar{b}_n \mapsto \bar{a}_n](H) \cup [\bar{b}_n \mapsto \bar{a}_n](I) &= [\bar{b}_n \mapsto \bar{a}_n](G \cup H \cup I)\\
	&=[\bar{b}_n \mapsto \bar{a}_n](\mathbb{N}) = \{\bar{a}_n\}
\end{align*}
since $(G,H,I)\in W$ and, therefore, $G \cup H \cup I = \mathbb{N}$.
On the other hand, we know that:
\begin{align*}
	(A \setminus \{\bar{a}_n\})\cup C_1 \cup C_2 &= (A \setminus \{\bar{a}_n\}) \cup (C \setminus \{\bar{a}_n\})\\
	&= (A \cup C)\setminus \{\bar{a}_n\} = \mathbb{N}\setminus \{\bar{a}_n\}
\end{align*}
since $(A,B,C)\in W$ and $B = \emptyset$, therefore, $A  \cup C = \mathbb{N}$.
Adding the two equalities together, we get that:
\begin{align*}
	J \cup K \cup L &= (\mathbb{N}\setminus \{\bar{a}_n\})\cup \{\bar{a}_n\} = \mathbb{N},
\end{align*}
as desired.

\textit{Claim 2}. $J$, $K$, and $L$ are infinite.

Indeed, $A$, $C_1$ and $C_2$ are infinite, $\{\bar{a}_n\}$ is finite, and the following inclusions hold:
\begin{align*}
	&A \setminus \{\bar{a}_n\} \subseteq J,\,C_1 \subseteq K,\,C_2 \subseteq L.
\end{align*}

\textit{Claim 3}. $J$, $K$, and $L$ are pairwise disjoint.

Indeed, take $J$ and $K$, for example. By definition, we have that
\begin{align*}
	J \cap K &= ((A \setminus \{\bar{a}_n\}) \cup [\bar{b}_n \mapsto \bar{a}_n](G)) \cap (C_1 \cup [\bar{b}_n \mapsto \bar{a}_n](H))\\
	&\subseteq ((A \setminus \{\bar{a}_n\}) \cup [\bar{b}_n \mapsto \bar{a}_n](G)) \cap ((C \setminus \{\bar{a}_n\})\cup [\bar{b}_n \mapsto \bar{a}_n](H))
\end{align*}

By application of distributivity laws, we further get that
\begin{align*}
	J \cap K &\subseteq ((A \setminus \{\bar{a}_n\}) \cap (C \setminus \{\bar{a}_n\}))\cup\\
	&\qquad\qquad\cup ((A \setminus \{\bar{a}_n\}) \cap [\bar{b}_n \mapsto \bar{a}_n](H))\\
	&\qquad\qquad\cup ((C \setminus \{\bar{a}_n\}) \cap [\bar{b}_n \mapsto \bar{a}_n](G))\\
	&\qquad\qquad\cup ([\bar{b}_n \mapsto \bar{a}_n](G) \cap [\bar{b}_n \mapsto \bar{a}_n](H))
\end{align*}  
Next, we know that $(A,B,C)\in W$, therefore, $A \cap C = \emptyset$, which implies that $(A \setminus \{\bar{a}_n\})\cap (C \setminus \{\bar{a}_n\}) = \emptyset$.

Moreover, we have 
$$
(A \setminus \{\bar{a}_n\}) \cap [\bar{b}_n \mapsto \bar{a}_n](H) \subseteq (A \setminus \{\bar{a}_n\}) \cap \{\bar{a}_n\} = \emptyset.
$$
By a parallel argument, we can see that also $(C \setminus \{\bar{a}_n\}) \cap [\bar{b}_n \mapsto \bar{a}_n](G) = \emptyset$.

Finally, we observe that $\bar{b}_n \mapsto \bar{a}_n$ is a bijection, and $(G,H,I)\in W$, therefore, $G \cap H = \emptyset$, whence it follows that $[\bar{b}_n \mapsto \bar{a}_n](G) \cap [\bar{b}_n \mapsto \bar{a}_n](H) = \emptyset$.

Summing up, we see that all the four sets in the union extending $J \cap K$ are empty and that we have that $J \cap K = \emptyset$. The other cases are similar.

\textit{Claim 4}. $(J,K,L)\in W$.

By Claims 1--3.

\textit{Claim 5}. $(A,B,C)\mathrel{\trianglelefteq}(J,K,L)$.

The argument for $A \subseteq J$ is exactly the same as in Case 1. To see that $L \subseteq C$, note that $L = C_2 \cup [\bar{b}_n \mapsto \bar{a}_n](I) \subseteq (C \setminus \{\bar{a}_n\}) \cup [\bar{b}_n \mapsto \bar{a}_n](I)$, so that the argument used for Case 1 is applicable also here.

\textit{Claim 6}. $(\mathbf{v}
\trianglelefteq (A,B,C)\,\&\,\mid B \cap \mathbf{v}_2\mid = \omega) \Rightarrow \mid K \cap \mathbf{v}_2\mid = \omega$.

The claim holds trivially since $\mid B \cap \mathbf{v}_2\mid = \omega$ is falsified by the assumption of Case 2.

\textit{Claim 7}. $(A,B,C)\mathrel{\prec_i}(J,K,L)$.

By Claims 5 and 6.

\textit{Claim 8}. $(J,K,L)^\frown(\bar{a}_n)\mathrel{\mathbb{A}}(G,H,I)^\frown(\bar{b}_n)$, and
$(G,H,I)^\frown(\bar{b}_n)\mathrel{\mathbb{A}}(J,K,L)^\frown(\bar{a}_n)$.

Again the claim follows by the argument used for Claim 8 of Case 1.

\textit{Condition \eqref{E:forth}}. Let $i,j$ be such that $\{i, j\} = \{1,2\}$, let $(A, B, C), (D,E,F)\in W$, and let, for some $n \geq 0$, $\bar{a}_n, \bar{b}_n \in \mathbb{N}^{n}$. Assume, moreover, that we have $(A,B,C)^\frown(\bar{a}_n)\mathrel{\mathbb{A}}(D,E,F)^\frown(\bar{b}_n)$, and that some $(J,K,L) \in W$ is such that $(J,K,L)\mathrel{\prec_i}(A,B,C)$ (hence, in particular, $(A,B,C)\mathrel{\trianglerighteq}(J,K,L)$). Again, we have to consider two cases, since $E$ can be either empty or infinite.

\textit{Case 1}. $\mid E\mid = \omega$. Then consider the triple $(G,H,I)$ such that:
\begin{eqnarray*}
	G & = & (D \setminus \{\bar{b}_n\}) \cup [\bar{a}_n \mapsto \bar{b}_n](J);\\
	H & = & (E \setminus \{\bar{b}_n\}) \cup [\bar{a}_n \mapsto \bar{b}_n](K);\\
	I & = & (F \setminus \{\bar{b}_n\}) \cup [\bar{a}_n \mapsto \bar{b}_n](L).
\end{eqnarray*}

We will show that $(G,H,I)\in W$ and that we have $(G,H,I)\mathrel{\prec_j}(D,E,F)$, $(J,K,L)^\frown(\bar{a}_n)\mathrel{\mathbb{A}}(G,H,I)^\frown(\bar{b}_n)$, and
$(G,H,I)^\frown(\bar{b}_n)\mathrel{\mathbb{A}}(J,K,L)^\frown(\bar{a}_n)$. We break down the demonstration of this statement into our usual series of eight claims. The arguments, for the most part, just dualize the arguments given for Case 1 of condition \eqref{E:back}, but we spell them out nonetheless.

\textit{Claim 1}. $G \cup H \cup I = \mathbb{N}$.

Indeed, we know that
\begin{align*}
	[\bar{a}_n \mapsto \bar{b}_n](J) \cup [\bar{a}_n \mapsto \bar{b}_n](K) \cup [\bar{a}_n \mapsto \bar{b}_n](L) &= [\bar{a}_n \mapsto \bar{b}_n](J \cup K \cup L)\\
	&=[\bar{a}_n \mapsto \bar{b}_n](\mathbb{N}) = \{\bar{b}_n\}
\end{align*}
since $(J,K,L)\in W$ and, therefore, $J \cup K \cup L = \mathbb{N}$.
On the other hand, we know that:
\begin{align*}
	(D \setminus \{\bar{b}_n\})\cup (E \setminus \{\bar{b}_n\})\cup (F \setminus \{\bar{b}_n\}) &= (D \cup E \cup F)\setminus \{\bar{b}_n\}\\
	&= \mathbb{N}\setminus \{\bar{b}_n\}
\end{align*}
since $(D,E,F)\in W$ and, therefore, $D \cup E \cup F = \mathbb{N}$.
Adding the two equalities together, we get that:
\begin{align*}
	G \cup H \cup I &= (\mathbb{N}\setminus \{\bar{b}_n\})\cup \{\bar{b}_n\} = \mathbb{N},
\end{align*}
as desired.

\textit{Claim 2}. $G$, $H$, and $I$ are infinite.

Indeed, $D$, $E$, and $F$ are infinite, $\{\bar{b}_n\}$ is finite, and the following inclusions hold:
\begin{align*}
	&D \setminus \{\bar{b}_n\} \subseteq G,\,E \setminus \{\bar{b}_n\} \subseteq H ,\,F \setminus \{\bar{b}_n\} \subseteq I.
\end{align*}

\textit{Claim 3}. $G$, $H$, and $I$ are pairwise disjoint.

Indeed, take $G$ and $H$, for example. By definition, we have that $G \cap H$ is equal to  
$$
((D \setminus \{\bar{b}_n\}) \cup [\bar{a}_n \mapsto \bar{b}_n](J)) \cap ((E \setminus \{\bar{b}_n\}) \cup [\bar{a}_n \mapsto \bar{b}_n](K)),
$$
By application of distributivity laws, we further get that
\begin{align*}
	G \cap H &= ((D \setminus \{\bar{b}_n\}) \cap (E \setminus \{\bar{b}_n\}))\cup\\
	&\qquad\qquad\cup ((D \setminus \{\bar{b}_n\}) \cap [\bar{a}_n \mapsto \bar{b}_n](K))\\
	&\qquad\qquad\cup ((E \setminus \{\bar{b}_n\}) \cap [\bar{a}_n \mapsto \bar{b}_n](J))\\
	&\qquad\qquad\cup ([\bar{a}_n \mapsto \bar{b}_n](J) \cap [\bar{a}_n \mapsto \bar{b}_n](K))
\end{align*}  
Next, we know that $(D,E,F)\in W$, therefore, $D \cap E = \emptyset$, which implies that $(D \setminus \{\bar{b}_n\})\cap (E \setminus \{\bar{b}_n\}) = \emptyset$.

Moreover, we have 
$$
(D \setminus \{\bar{b}_n\}) \cap [\bar{a}_n \mapsto \bar{b}_n](K) \subseteq (D \setminus \{\bar{b}_n\}) \cap \{\bar{b}_n\} = \emptyset.
$$
By a parallel argument, we can see that also $(E \setminus \{\bar{b}_n\}) \cap [\bar{a}_n \mapsto \bar{b}_n](J) = \emptyset$.

Finally, we observe that $\bar{a}_n \mapsto \bar{b}_n$ is a bijection, and $(J,K,L)\in W$, therefore, $J \cap K = \emptyset$, whence it follows that $[\bar{a}_n \mapsto \bar{b}_n](J) \cap [\bar{a}_n \mapsto \bar{b}_n](K) = \emptyset$.

Summing up, we see that all the four sets in the union defining $G \cap H$ are empty and that we have that $G \cap H = \emptyset$. The other cases are similar.

\textit{Claim 4}. $(G,H,I)\in W$.

By Claims 1--3.

\textit{Claim 5}. $(G,H,I)\mathrel{\trianglelefteq}(D,E,F)$.

Indeed, if $b \in G$, then either $b \in D \setminus \{\bar{b}_n\} \subseteq D$, or $b \in [\bar{a}_n \mapsto \bar{b}_n](J)$. In the latter case, $b = b_k$ for some $1 \leq k \leq n$, and also $a_k \in J$. Since $(J,K,L)\trianglelefteq (A,B,C)$ implies that $J \subseteq
A$, we get that $a_k \in A$, but the latter means, by $(A,B,C)^\frown(\bar{a}_n)\mathrel{\mathbb{A}}(D,E,F)^\frown(\bar{b}_n)$, that $b_k \in D$. Since $b \in G$ was chosen arbitrarily, we have shown that $G \subseteq D$.

Next, if $b \in F$, and $b \notin \{\bar{b}_n\}$, then $b \in I$ by definition of $F$. Otherwise we have both $b \in F$ and
$b = b_k$ for some $1 \leq k \leq n$, but then $a_k \in C$ by $(A,B,C)^\frown(\bar{a}_n)\mathrel{\mathbb{A}}(D,E,F)^\frown(\bar{b}_n)$ and Corollary \ref{C:contraposed-bi-asimulation}. Now, note that $(J,K,L)\mathrel{\trianglelefteq}(A,B,C)$ implies that $C \subseteq L$, which means that $a_k \in L$. But the latter means that $b = b_k \in [\bar{a}_n \mapsto \bar{b}_n](L) \subseteq I$. Since $b \in F$ was chosen arbitrarily, we have shown that $F \subseteq I$.

\textit{Claim 6}. $(\mathbf{v}
\trianglelefteq (G,H,I)\,\&\,\mid H \cap \mathbf{v}_2\mid = \omega) \Rightarrow \mid E \cap \mathbf{v}_2\mid = \omega$.

We observe that if  $H \cap \mathbf{v}_2$ is infinite, then so is $(E
\setminus \{\bar{b}_n\}) \cap \mathbf{v}_2$, given that $H = (E \setminus \{\bar{b}_n\}) \cup [\bar{a}_n \mapsto \bar{b}_n](K)$ and $[\bar{a}_n \mapsto \bar{b}_n](K)$
is finite. But then $E \cap \mathbf{v}_2$ 
must be infinite, too.

\textit{Claim 7}. $(G,H,I)\mathrel{\prec_j}(D,E,F)$.

By Claims 5 and 6.

\textit{Claim 8}. $(J,K,L)^\frown(\bar{a}_n)\mathrel{\mathbb{A}}(G,H,I)^\frown(\bar{b}_n)$, and
$(G,H,I)^\frown(\bar{b}_n)\mathrel{\mathbb{A}}(J,K,L)^\frown(\bar{a}_n)$.

Indeed, for any given $1 \leq k \leq
n$, we have $b_k \in G$ iff $b_k \in [\bar{a}_n \mapsto \bar{b}_n](J)$ iff $a_k \in J$, since $\bar{a}_n \mapsto \bar{b}_n$ defines a bijection. Similarly, we have $b_k \in I$ iff  $b_k \in [\bar{a}_n \mapsto \bar{b}_n](L)$ iff $a_k \in L$. But then our Claim follows from Corollary \ref{C:A-bi-asimulation}.

The correctness of our construction for Case 1 of condition \eqref{E:forth} now follows from Claims 4, 7, and 8.

\textit{Case 2}. $E = \emptyset$. In this case, we partition $D \setminus \{\bar{b}_n\}$ into two
disjoint infinite sets $D_1$ and $D_2$. Additionally, in case both $D \cap \mathbf{v}_1$ and $D \cap \mathbf{v}_2$ are infinite (which means that $(D \setminus \{\bar{b}_n\}) \cap \mathbf{v}_2$ and $(D \setminus \{\bar{b}_n\}) \cap \mathbf{v}_1$ are infinite, too), we ensure that we have $(D \setminus \{\bar{b}_n\}) \cap \mathbf{v}_2 \subseteq D_1$ and $(D \setminus \{\bar{b}_n\}) \cap \mathbf{v}_1 \subseteq D_2$. Then we define $(G,H,I)$
as follows:

\begin{eqnarray*}
	G & = & D_1 \cup [\bar{a}_n \mapsto \bar{b}_n](J);\\
	H & = & D_2 \cup [\bar{a}_n \mapsto \bar{b}_n](K);\\
	I & = & (F \setminus \{\bar{b}_n\}) \cup [\bar{a}_n \mapsto \bar{b}_n](L).
\end{eqnarray*}
We will demonstrate our usual eight claims in some detail again, even though the arguments mostly dualize the proofs given for the respective claims in Case 2 of condition \eqref{E:back} (Claim 6 below being, perhaps, the only exception to this rule).

\textit{Claim 1}. $G \cup H \cup I = \mathbb{N}$.

Again, we know that
\begin{align*}
	[\bar{a}_n \mapsto \bar{b}_n](J) \cup [\bar{a}_n \mapsto \bar{b}_n](K) \cup [\bar{a}_n \mapsto \bar{b}_n](L) &= [\bar{a}_n \mapsto \bar{b}_n](J \cup K \cup L)\\
	&=[\bar{a}_n \mapsto \bar{b}_n](\mathbb{N}) = \{\bar{b}_n\}
\end{align*}
since $(J,K,L)\in W$ and, therefore, $J \cup K \cup L = \mathbb{N}$.
On the other hand, we know that:
\begin{align*}
	(F \setminus \{\bar{b}_n\})\cup D_1 \cup D_2 &= (F \setminus \{\bar{b}_n\}) \cup (D \setminus \{\bar{b}_n\})\\
	&= (F \cup D)\setminus \{\bar{a}_n\} = \mathbb{N}\setminus \{\bar{a}_n\}
\end{align*}
since $(D,E,F)\in W$ and $E = \emptyset$, therefore, $D  \cup F = \mathbb{N}$.
Adding the two equalities together, we get that:
\begin{align*}
	G \cup H \cup I &= (\mathbb{N}\setminus \{\bar{b}_n\})\cup \{\bar{b}_n\} = \mathbb{N},
\end{align*}
as desired.

\textit{Claim 2}. $G$, $H$, and $I$ are infinite.

Indeed, $D_1$, $D_2$, and $F$ are infinite, $\{\bar{b}_n\}$ is finite, and the following inclusions hold:
\begin{align*}
	&D_1 \subseteq G,\,D_2 \subseteq H,\,F \setminus \{\bar{b}_n\} \subseteq I.
\end{align*}

\textit{Claim 3}. $G$, $H$, and $I$ are pairwise disjoint.

Indeed, take $G$ and $I$, for example. By definition, we have that
\begin{align*}
	G \cap I &= ((F \setminus \{\bar{b}_n\}) \cup [\bar{a}_n \mapsto \bar{b}_n](L)) \cap (D_1 \cup [\bar{a}_n \mapsto \bar{b}_n](J))\\
	&\subseteq ((F \setminus \{\bar{b}_n\}) \cup [\bar{a}_n \mapsto \bar{b}_n](L)) \cap ((D \setminus \{\bar{b}_n\})\cup [\bar{a}_n \mapsto \bar{b}_n](J))
\end{align*}

By application of distributivity laws, we further get that
\begin{align*}
	G \cap I &\subseteq ((F \setminus \{\bar{b}_n\}) \cap (D \setminus \{\bar{b}_n\}))\cup\\
	&\qquad\qquad\cup ((F \setminus \{\bar{b}_n\}) \cap [\bar{a}_n \mapsto \bar{b}_n](J))\\
	&\qquad\qquad\cup ((D \setminus \{\bar{b}_n\}) \cap [\bar{a}_n \mapsto \bar{b}_n](L))\\
	&\qquad\qquad\cup ([\bar{a}_n \mapsto \bar{b}_n](J) \cap [\bar{a}_n \mapsto \bar{b}_n](L))
\end{align*}  
Next, we know that $(D,E,F)\in W$, therefore, $D \cap F = \emptyset$, which implies that $(D \setminus \{\bar{b}_n\})\cap (F \setminus \{\bar{b}_n\}) = \emptyset$.

Moreover, we have 
$$
(F \setminus \{\bar{b}_n\}) \cap [\bar{a}_n \mapsto \bar{b}_n](J) \subseteq (F \setminus \{\bar{b}_n\}) \cap \{\bar{b}_n\} = \emptyset.
$$
By a parallel argument, we can see that also $(D \setminus \{\bar{b}_n\}) \cap [\bar{a}_n \mapsto \bar{b}_n](L) = \emptyset$.

Finally, we observe that $\bar{a}_n \mapsto \bar{b}_n$ is a bijection, and $(J,K,L)\in W$, therefore, $J \cap L = \emptyset$, whence it follows that $[\bar{a}_n \mapsto \bar{b}_n](J) \cap [\bar{a}_n \mapsto \bar{b}_n](L) = \emptyset$.

Summing up, we see that all the four sets in the union extending $G \cap I$ are empty and that we have $G \cap I = \emptyset$. The other cases are similar.

\textit{Claim 4}. $(G,H,I)\in W$.

By Claims 1--3.

\textit{Claim 5}. $(G,H,I)\mathrel{\trianglelefteq}(D,E,F)$.

The argument for $F \subseteq I$ is exactly the same as in Case 1. To see that $G \subseteq D$, note that $G = D_1 \cup [\bar{a}_n \mapsto \bar{b}_n](J) \subseteq (D \setminus \{\bar{b}_n\}) \cup [\bar{a}_n \mapsto \bar{b}_n](J)$, so that the argument used for Case 1 is applicable also here.

\textit{Claim 6}. $(\mathbf{v}
\mathrel{\trianglelefteq}(G,H,I)\,\&\,\mid H \cap \mathbf{v}_2\mid = \omega) \Rightarrow \mid E \cap \mathbf{v}_2\mid = \omega$.

Indeed, if $\mathbf{v}
\mathrel{\trianglelefteq}(G,H,I)$, then, in particular, $\mathbf{v}_1 \subseteq G$, so that $\mid G \cap \mathbf{v}_1\mid = \omega$. But we have $G = D_1 \cup [\bar{a}_n \mapsto \bar{b}_n](J)$ and $[\bar{a}_n \mapsto \bar{b}_n](J)$ is finite; therefore, both $\mathbf{v}_1 \cap D_1$, and its superset,  $\mathbf{v}_1 \cap D$, must be infinite.

If also $\mid H \cap \mathbf{v}_2\mid = \omega$, then a parallel argument shows that also $\mathbf{v}_2 \cap D_2$ and $\mathbf{v}_2 \cap D$ are infinite.

But, since both $\mathbf{v}_1 \cap D$ and $\mathbf{v}_2 \cap D$ are infinite, we must have, by the choice of $D_1$ and $D_2$, that $\mathbf{v}_2 \cap (D \setminus \{\bar{b}_n\}) \subseteq D_1$. On the other hand, $D_2 \subseteq (D \setminus \{\bar{b}_n\})$ implies that $D_2 = D_2 \cap (D \setminus \{\bar{b}_n\})$.

Therefore:
$$
\mathbf{v}_2 \cap D_2 = \mathbf{v}_2 \cap (D \setminus \{\bar{b}_n\})\cap D_2 \subseteq D_1 \cap D_2 = \emptyset. 
$$  
Since we have $H = D_2 \cup [\bar{a}_n \mapsto \bar{b}_n](K)$, and $[\bar{a}_n \mapsto \bar{b}_n](K)$ is clearly finite, the set $H \cap \mathbf{v}_2$ can be at most finite, which is a contradiction.

\textit{Claim 7}. $(A,B,C)\mathrel{\prec_i}(J,K,L)$.

By Claims 5 and 6.

\textit{Claim 8}. $(J,K,L)^\frown(\bar{a}_n)\mathrel{\mathbb{A}}(G,H,I)^\frown(\bar{b}_n)$, and
$(G,H,I)^\frown(\bar{b}_n)\mathrel{\mathbb{A}}(J,K,L)^\frown(\bar{a}_n)$.

Again the claim follows by the argument used for Claim 8 of Case 1. 

\textit{Condition} \eqref{E:left}. Let $i,j$ be such that $\{i, j\} = \{1,2\}$, let $(A, B, C), (D,E,F)\in W$, and let, for some $n \geq 0$, $\bar{a}_n, \bar{b}_n \in \mathbb{N}^{n}$. Assume, moreover, that we have $(A,B,C)^\frown(\bar{a}_n)\mathrel{\mathbb{A}}(D,E,F)^\frown(\bar{b}_n)$, and that $b \in \mathbb{N}$. If $b = b_k$ for some $1 \leq k \leq n$, then we set $a: = a_k$. Otherwise, given that $b \notin \{\bar{b}_n\}$, we choose any $a \in C \setminus \{\bar{a}_n\}$. We can do this since $C$ is infinite and $\{\bar{a}_n\}$ is finite. In both cases we get that $(A,B,C)^\frown(\bar{a}_n)^\frown(a)\mathrel{\mathbb{A}}(D,E,F)^\frown(\bar{b}_n)^\frown(b)$ by definition of $\mathbb{A}$.

\textit{Condition} \eqref{E:right}. Let $i,j$ be such that $\{i, j\} = \{1,2\}$, let $(A, B, C), (D,E,F)\in W$, and let, for some $n \geq 0$, $\bar{a}_n, \bar{b}_n \in \mathbb{N}^{n}$. Assume, moreover, that we have $(A,B,C)^\frown(\bar{a}_n)\mathrel{\mathbb{A}}(D,E,F)^\frown(\bar{b}_n)$, and that $a \in \mathbb{N}$. If $a = a_k$ for some $1 \leq k \leq n$, then we set $b: = b_k$. Otherwise, given that $a \notin \{\bar{a}_n\}$, we choose any $b \in D \setminus \{\bar{b}_n\}$. We can do this since $D$ is infinite and $\{\bar{b}_n\}$ is finite. In both cases we get that $(A,B,C)^\frown(\bar{a}_n)^\frown(a)\mathrel{\mathbb{A}}(D,E,F)^\frown(\bar{b}_n)^\frown(b)$ by definition of $\mathbb{A}$.
\end{proof}
At this point, it only remains to reap the fruits of the tedious work towards the host of previous lemmas and corollaries:
\begin{theorem}\label{T:main}
	$\mathsf{QBIL}$ fails CIP.
\end{theorem}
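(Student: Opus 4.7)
The plan is to proceed by contradiction: assume that $\mathsf{QBIL}$ enjoys CIP and derive an absurdity from Mints's counterexample $\phi \to \psi$. First I would verify that $\phi \to \psi$ is $\mathsf{QBIL}$-valid. This is immediate from the corresponding fact in $\mathsf{CD}$ established in \cite{mou}, since $\phi \to \psi$ contains no co-implication and $\mathsf{CD}$ is the $\ll$-free fragment of $\mathsf{QBIL}$. Under the CIP hypothesis, there must then exist an interpolant $\theta \in L_\emptyset(\Theta_\phi \cap \Theta_\psi) = L_\emptyset(\Sigma)$, where $\Sigma = \{P^1, Q^1\}$.

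The central move would be a ``sandwich'' transfer of $\theta$ across all four available models. Starting from $\mathcal{M}'_1, \mathbf{v} \models \phi$ (Lemma \ref{L:extendable}), the validity of $\phi \to \theta$ yields $\mathcal{M}'_1, \mathbf{v} \models \theta$. Since $\theta$ lives in the smaller signature $\Sigma$ and $\mathcal{M}'_1$ is an expansion of $\mathcal{M}_1$ by $R$, the Expansion Property (Theorem \ref{T:expansion-property}) lets me strip $R$ to obtain $\mathcal{M}_1, \mathbf{v} \models \theta$. Next the bi-asimulation machinery takes over: by Lemma \ref{L:A-bi-asimulation}, $\mathbb{A}$ is a bi-asimulation between $\mathcal{M}_1$ and $\mathcal{M}_2$ with $\mathbf{v} \mathrel{\mathbb{A}} \mathbf{w}$, so Lemma \ref{L:preserve} transports $\theta$ across to deliver $\mathcal{M}_2, \mathbf{w} \models \theta$. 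Expansion Property in the reverse direction --- now restoring $S$ --- gives $\mathcal{M}'_2, \mathbf{w} \models \theta$. Finally, the validity of $\theta \to \psi$ forces $\mathcal{M}'_2, \mathbf{w} \models \psi$, directly contradicting the second claim of Lemma \ref{L:extendable}.

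There is no genuinely hard step remaining at this stage: all the technical weight has already been discharged upstream, by Lemma \ref{L:extendable} (which supplies the $\phi$-verifier and the $\psi$-refuter) and by Lemmas \ref{L:A-bi-asimulation} and \ref{L:preserve} (which deliver the bi-asimulation together with its preservation property, this being the key point at which the presence of the new connective $\ll$ in the language is handled). The only thing requiring care is the book-keeping of signatures so that the two appeals to Expansion Property are well-posed: the first invocation strips $R \notin \Theta_\psi$ and the second restores $S \notin \Theta_\phi$, both of which are legitimate precisely because neither predicate occurs in the common vocabulary $\Sigma$ in which $\theta$ is written. The resulting argument is short; its entire difficulty has been externalised into the preceding lemmas.
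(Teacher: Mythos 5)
Your proposal is correct and follows essentially the same route as the paper: Lemma \ref{L:extendable} supplies the $\phi$-verifier and $\psi$-refuter, Expansion Property (Theorem \ref{T:expansion-property}) handles the signature book-keeping, and Lemmas \ref{L:A-bi-asimulation} and \ref{L:preserve} transport $\theta$ from $(\mathcal{M}_1,\mathbf{v})$ to $(\mathcal{M}_2,\mathbf{w})$, yielding the contradiction. The only cosmetic difference is that you push $\theta$ back up to $\mathcal{M}'_2$ and apply $\theta\to\psi$ there, whereas the paper derives $\mathcal{M}'_2,\mathbf{w}\not\models\theta$ first and then strips to $\mathcal{M}_2$; both are legitimate since the Expansion Property is an equivalence.
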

\begin{proof}
	Indeed, we have $\models \phi \to \psi$ and $\Theta_\phi \cap \Theta_\psi = \Sigma$. If now $\theta \in L_\emptyset(\Sigma)$ is such that both $\models \phi \to \theta$ and $\models \theta \to \psi$, then, by Lemma \ref{L:extendable}, we must have both $\mathcal{M}'_1,\mathbf{v}\models \theta$ and $\mathcal{M}'_2,\mathbf{w}\not\models \theta$. We also have $\mathcal{M}_i = \mathcal{M}'_i\upharpoonright\Sigma$ for $i \in \{1,2\}$, therefore, by Expansion Property, we get that $\mathcal{M}_1,\mathbf{v}\models \theta$ and $\mathcal{M}_2,\mathbf{w}\not\models \theta$.  On the other hand, we know, by Lemma \ref{L:A-bi-asimulation}, that the relation $\mathbb{A}$, given in Definition \ref{D:A-bi-asimulation}, is a bi-asimulation between $\mathcal{M}_1$ and $\mathcal{M}_2$ and that we have $\mathbf{v}\mathrel{\mathbb{A}}\mathbf{w}$. Therefore, $\mathcal{M}_1,\mathbf{v}\models \theta$ implies, by Lemma \ref{L:preserve}, that $\mathcal{M}_2,\mathbf{w}\models \theta$. The obtained contradiction shows that no interpolant exists for $\phi \to \psi$. 
\end{proof}
\textbf{Remark 2}. Note that Corollary \ref{C:M1}.1--2 implies that the submodels generated by $\mathbf{v}$ and $\mathbf{w}$ in $\mathcal{M}_1$ and $\mathcal{M}_2$, respectively, are exactly the models $\mathcal{M}_1$ and $\mathcal{M}_2$ defined in \cite[\S7]{mou}. Moreover, Definition \ref{D:A-bi-asimulation} and the proof of Lemma \ref{L:A-bi-asimulation}, except for the part treating condition \eqref{E:forth} of Definition \ref{D:bi-asimulation}, coincide with the respective constructions of \cite[\S7]{mou}. Thus our main construction in this paper is, in a very precise sense, just an extension of the main construction given in \cite[\S7]{mou}.

\section{Conclusion}\label{con}
In this article we have refuted the Craig Interpolation Property for predicate bi-intuitionistic logic, showing how different the situation is from the propositional case that was solved positively in \cite{ko}. We proved that Mints's counterexample \cite{mou} for predicate intuitionitic logic with constant domains also did the work in the present context.
It is clear that, although we allowed constants in our presentation, their role is purely technical, and that due to the nature of Mints's counterexample, we have also disproved CIP for the purely relational variant of $\mathsf{QBIL}$. The present work still leaves some   related open questions, such as  the status of the Beth definability property in the bi-intuitionistic  setting. For the time being, we have  left these and other  similar matters as topics for future research.

\section{Acknowledgements}
Grigory Olkhovikov is supported by Deutsche
Forschungsgemeinschaft (DFG), project OL 589/1-1. Guillermo Badia is partially supported by  the Australian Research Council grant DE220100544.

Grigory Olkhovikov would like to dedicate this paper to the memory of his dear co-author, late Professor Grigori Mints, whose strong and incisive mind has been missing in this world since May 29, 2014, and has been deeply missed on every single day since that fateful date. Guillermo Badia most emphatically supports this decision.

}
\end{document}